\newtheorem{thm}{Theorem}[section]
\newtheorem{prop}[thm]{Proposition}
\newtheorem{cor}[thm]{Corollary}
\newtheorem{lem}[thm]{Lemma}
\newtheorem*{theorem*}{Theorem}
\theoremstyle{remark}
\newtheorem{rem}[thm]{Remark}
\newtheorem{defn}[thm]{Definition}
\newtheorem{example}[thm]{Example}
\numberwithin{equation}{section}
\newcommand{\N}{\mathbb{N}}
\newcommand{\C}{\mathbb{C}}
\newcommand{\La}{\Lambda}
\newcommand{\la}{\lambda}
\newcommand{\g}{\mathcal{G}}
\newcommand{\ag}{A_K(\mathcal{G})}
\newcommand{\go}{\mathcal{G}^{(0)}}
\newcommand{\kp}{\mathrm{KP}_K(\Lambda)}
\newcommand{\ac}{A_{\mathbb{C}}(\mathcal{G})}
\begin{document}
\title[Purely infinite Steinberg Algebras]{Non-simple purely infinite Steinberg Algebras with applications to Kumjian-Pask algebras}

\author{Hossein Larki}

\address{Department of Mathematics\\ Faculty of Mathematical Sciences and Computer\\ Shahid Chamran University of Ahvaz, Iran}
\email{h.larki@scu.ac.ir}

%\thanks{This research was supported by NSF Grant DMS-1234567}

\date{\today}

\subjclass[2010]{16S60, 46L06}

\keywords{Purely infinite ring, groupoid, Steinberg algebra, higher-rank grph, Kumjian-Pask algebra}

\begin{abstract}
We characterize properly purely infinite Steinberg algebras $\ag$ for strongly effective, ample Hausdorff groupoids $\g$. As an application, when $\La$ is a strongly aperiodic $k$-graph, we show that the notions of pure infiniteness and proper pure infiniteness are equivalent for the Kumjian-Pask algebra $\kp$, which may be determined by the proper infiniteness of vertex idempotents. In particular, for unital cases, we give simple graph-theoretic criteria for the (proper) pure infiniteness of $\kp$.

Furthermore, since the complex Steinberg algebra $A_\C(\g)$ is a dense subalgebra of the reduced groupoid $C^*$-algebra $C^*_r(\g)$, we focus on the problem that ``when does the proper pure infiniteness of $A_\C(\g)$ imply that of $C^*_r(\g)$ in the $C^*$-sense?". In particular, we show that if the Kumjian-Pask algebra $\mathrm{KP}_\C(\La)$ is purely infinite, then so is $C^*(\La)$ in the sense of Kirchberg-R{\o}rdam.
\end{abstract}

\maketitle

%%%%%%%%%%%%%%%%%%%%%%%%%%%%%%%%%%%%%%%%%%%%%%%%

\section{Introduction}

Inspired by the Kirchberg-R${\o}$rdam's definition of non-simple purely infinite $C^*$-algebras \cite{kirch00}, Aranda Pino, Goodearl, Perera, and Siles Molina introduced the notion of purely infinite and properly purely infinite properties for (not necessarily simple) rings \cite{pin10} as the nonsimple generalization of \cite{ara02} (see Section 3 for definitions). In particular, it is shown in \cite[Proposition 3.17]{pin10} that if a $C^*$-algebra is purely infinite in the sense of \cite{pin10}, then it is purely infinite in the Kirchberg-R${\o}$rdam's sense. These concepts are closely related, such that for s-unital rings, the properly purely infinite property implies the purely infinite one \cite[Lemma 3.4(i)]{pin10}, and we know that the converse is also true if the ring is either exchange \cite[Corollary 2.9]{pin10} or simple \cite[Corollary 5.8]{pin10} for example.

Let $K$ be a field. Following \cite{cla16}, in this paper we work with strongly effective ample groupoids $\g$ and associated Steinberg algebras $A_K(\g)$. Our aim here is to investigate (non-simple) purely infinite Steinberg algebras and Kumjian-Pask algebras. Recall that the theory of Steinberg algebras were independently introduced in \cite{ste10} and \cite{cla14} as a pure algebraic analogue to Renault's groupoid $C^*$-algebras \cite{ren80}. There has been a lot of interest in the study of Steinberg algebras, partly because they include, and give a groupoid approach for, many interesting classes of algebras such as Leavitt path \cite{abr05}, Kumjian-Pask \cite{pin13}, and inverse semigroup algebras \cite{ste10} among others. Moreover, many structural properties of these algebras can be described by that of the underlying groupoids; such as, simplicity \cite{bro14}, primeness \cite{ste18-2}, primitivity \cite{ste16}, simple pure infiniteness \cite{bro19}, and ideal structure \cite{cla16}, among others.

The contents of this article are as follows. We review in Section 2 some preliminaries about groupoids, Steinberg algebras and Kumjian-Pask algebras which will be used throughout. In Section 3, we prove that a Steinberg algebra $\ag$ is properly purely infinite if and only if its characteristic functions on compact open bisections are properly infinite. Then we give a sufficient condition for this property. In Section 4, we apply this result to the Kumjian-Pask algebras of strongly aperiodic $k$-graphs, and derive interesting consequences. In particular, we show that purely infinite and properly purely infinite Kumjian-Pask algebras coincide, which may be characterized by verifying the proper infiniteness of their vertex idempotents.  This is the generalization of \cite[Theorem 7.4]{pin10} which is for the class of Leavitt path algebras, but our proof, even in the Leavitt path algebras setting, is different and simpler. Moreover, we see that a unital Kumjian-Pask algebras is (properly) purely infinite if and only if every vertex in the underlying $k$-graph receives at least one nontrivial path.

For any locally compact Hausdorff groupoid $\g$, we know that the complex Steinberg algebra $A_\C(\g)$ is a dense subalgebra of the reduced groupoid $C^*$-algebra $C^*_r(\g)$ (see \cite[Proposition 4.2]{cla14}). In \cite{bro19}, the authors considered \cite[Problem 8.4]{pin10} for simple groupoid algebras, and proved that for any second-countable ample groupoid $\g$, if the complex Steinberg algebra $A_\C(\g)$ is purely infinite simple, then so is the $C^*$-algebra $C^*_r(\g)$ in the sense of \cite{cun81}. In Section 5, we consider second-countable and amenable groupoids $\g$ and  generalize this result to not necessarily simple groupoid algebras with a shorter proof. To do this, we first show that a groupoid $C^*$-algebra $C^*_r(\g)$ is $C^*$-purely infinite (in the sense of \cite{kirch00}) if and only if all its characteristic functions are $C^*$-properly infinite, which is interesting in its own right. Since the goupoid $\g_\La$ associated to each $k$-graph $\La$ is always amenable and second-countable, we conclude that the pure infiniteness of complex Kumjian-Pask algebra $\mathrm{KP}_\C(\La)$ implies that of the $C^*$-algebra $C^*(\La)$ for strongly aperiodic $\La$'s.

%%%%%%%%%%%%%%%%%%%%%%%%%%%%%%%%%%%%%%%%%%%%%%%%

\section{Preliminaries}

Here, we review some definitions and basic properties of groupoids, Steinberg algebras and Kumjian-Pask algebras.

\subsection{Groupoids}

By {\it groupoid}, we mean a small category $\g$ endowed with a composition $\g^{(2)}\subseteq \g\times \g\rightarrow \g$, by $(\alpha,\beta)\mapsto \alpha\beta$, and an inversion $\alpha\mapsto \alpha^{-1}$ on $\g$ satisfying the following conditions:
\begin{enumerate}
  \item if $(\alpha,\beta),(\beta,\gamma)\in \g^{(2)}$, then both $(\alpha\beta,\gamma)$ and $(\alpha,\beta\gamma)$ are composable and we have $(\alpha\beta)\gamma=\alpha(\beta\gamma)$;
  \item $(\alpha^{-1},\alpha)\in \g^{(2)}$ for all $\alpha\in \g$; and
  \item if $(\alpha,\beta)\in \g^{(2)}$, then $\alpha^{-1}(\alpha\beta)=\beta$ and $(\alpha\beta)\beta^{-1}=\alpha$.
\end{enumerate}
Then the source and range maps on $\g$ are $s(\alpha)=\alpha^{-1}\alpha$ and $r(\alpha)=\alpha\alpha^{-1}$ for all $\alpha\in \g$. In particular, we have $(\alpha,\beta)\in \g^{(2)}$ if and only if $s(\alpha)=r(\beta)$. For every two subsets $A,B$ of $\g$, we define the product of $A$ and $B$ as
$$AB:=\{\alpha\beta:\alpha\in A,\beta\in B, s(\alpha)=r(\beta)\}.$$
We will let $\go$ the {\it unit space of $\g$}, that is $\go=\{\alpha^{-1}\alpha:\alpha\in \g\}$. Moreover, the {\it isotropy subgroupoid} of $\g$ is defined by
$$\mathrm{Iso}(\g):=\{\alpha\in \g:s(\alpha)=r(\alpha)\}.$$

We say that a subset $U\subseteq \go$ is {\it invariant} if, for $\alpha\in \g$, $s(\alpha)\in U$ implies $r(\alpha)\in U$, under which we have $r(s^{-1}(U)=U=s(r^{-1}(U))$. Note that in case $U$ is an invariant subset of $\go$, then $\g_U:=s^{-1}(U)$ is a subgroupoid of $\g$ (with the unit space $U$), which coincides with the restriction
$$\g|_U:=\{\alpha\in \g:s(\alpha),r(\alpha)\in U\}.$$

A groupoid $\g$ endowed with a topology is called a {\it topological groupoid} in case the composition and inverse maps are continuous. Let $\g$ be a topological groupoid. A subset $B\subseteq \g$ is a {\it bisection} if the restrictions $r|_B$ and $s|_B$ are homeomorphisms. Then we say that $\g$ is {\it ample} if $\g$ has a basis of compact open bisections. In this case, $\go$ is an open and totally disconnected subset of $\g$.

\begin{defn}
A topological groupoid $\g$ is called {\it effective} if the interior of $\mathrm{Iso}(\g)$ is $\go$. Also, we say that $\g$ is {\it strongly effective} in case for every nonempty closed invariant subset $V\subseteq \go$, the restricted groupoid $\g_V$ is effective.
\end{defn}

Clearly, when $\g$ is strongly effective, then it is effective as well because $\go$ is a closed invariant set. In this paper, we only consider the strongly effective ample Hausdorff groupoids.

\subsection{Steinberg Algebras}

Let $\g$ be an ample groupoid and $K$ a field. The {\it Steinberg algebra} $\ag$ associated to $\g$ consists of all locally constant and compactly supported functions from $\g$ into $K$. Then it has $K$-algebra structure by considering the pointwise addition as usual and the convolution multiplication as
$$(fg)(\alpha)=\sum_{r(\beta)=r(\alpha)}f(\beta)g(\beta^{-1}\alpha) \hspace{5mm} (f,g\in \ag ~~ \mathrm{and} ~~ \alpha\in \g).$$
Since $\g$ is assumed to be ample, we have
$$\ag=\mathrm{span}_K\left\{1_B: B \mathrm{~is~a~compact~open~bisection}\right\},$$
where $1_B$ is the characteristic function on $B$. In particular, $\ag$ has local units, in the sense that for each $f\in \ag$ there is an idempotent $p\in \ag$ such that $fp=pf=f$, and so is s-unital. Examples of Steinberg algebras of ample groupoids include Leavitt path algebras \cite{cla14} as well as Kumjian-Pask algebras \cite{cla14,cla17}(see Section 4 for details).

By {\it ideal}, we always mean a two-sided one. According to \cite[Theorem 3.1]{cla16}, when $\g$ is a strongly effective ample Hausdorff groupoid, then every ideal of $\ag$ is of the form
$$I_U:=\left\{f\in \ag:\mathrm{supp} f\subseteq \g_U\right\},$$
for some open invariant subset $U$ of $\go$. Moreover, if $U$ is an open invariant subset of $\go$ and $D:=\go\setminus U$, then \cite[Lemma 3.6]{cla16} says that the restrictive map $q(f)=f|_{\g_D}$ is an epimorphism from $\ag$ into $A_K(\g_D)$ such that $\ker(q)=I_U$; thus we have $\ag/I_U\cong A_K(\g_D)$. We will use this result in Section 3 because the property of proper infiniteness is closely related to the structure of ideals and quotients.

\subsection{Higher-rank graphs and their Kumjian-Pask algebras}

Here, we recall some basic definitions of graphs of rank $k\geq 1$ and associated Kumjian-Pask algebras from \cite{kum00,pin13,cla14-2}, which will be needed in the results of Section 4. Let $\N=\{0,1,2,\ldots\}$. For fixed integer $k\geq 1$, we consider the additive semigroup $\N^k$ with the identity $0:=(0,\ldots,0)$. We write each element $n\in \N^k$ as $n=(n_1,\ldots,n_k)$, and the generators of $\N^k$ by $e_1,\ldots,e_k$, where $e_{ij}=\delta_{i,j}$ for $1\leq i,j\leq k$. We may put a partial order $\leq$ on $\N^k$ by $m\leq n$ $\Longleftrightarrow$ $m_i\leq n_i$ for all $1\leq i\leq k$, and denote $m\vee n$ and $m\wedge n$ for the coordinate-wise maximum and minimum, respectively.

\begin{defn}[\cite{kum00}]
A  \emph{$k$-graph} (or \emph{higher-rank graph}) is a countable small category $\La=(\La^0,\La,r,s)$, where $\La^0$ is the objects, $\La$ morphisms, $r,s:\La\rightarrow \La^0$ the range and source maps, which is equipped with a \emph{degree functor} $d:\La \rightarrow \N^k$ satisfying the \emph{unique factorisation property}: for each $\la\in\La$ and $0\leq n\leq d(\la)$, there exists a unique factorisation $\la=\la(0,n) \la(n,d(\la))$ for $\la$ such that $d(\la(0,n))=n$, $d(\la(n,d(\la)))=d(\la)-n$.
\end{defn}

Every $k$-graph $\La$ has an edge-colored graph as its 1-skeleton, so we usually like to refer the objects of $\La^0$ as vertices and the morphisms of $\La$ as paths. If $H\subseteq \La^0$ and $E\subseteq \La^1$, we write
$$HE:=\{\la\in E: r(\la)\in H\}  \hspace{5mm} \mathrm{and} \hspace{5mm} E H:=\{\la\in E: s(\la)\in H\},$$
and when $H=\{v\}$ is a singleton, we simply write $vE$ and $E v$, respectively. For any $n\in \N^k$, we define $\La^n:=\{\la\in \La:d(\la)=n\}$ and
$$\Lambda^{\leq n}:=\{\lambda\in \Lambda : d(\lambda)\leq n, ~\mathrm{and}~~  d(\lambda)+e_{i}\leq n \Longrightarrow s(\lambda)\Lambda^{e_{i}}=\emptyset\}.$$
Given $\mu,\nu\in \La$, the set of \emph{minimal common extensions} for $\mu$ and $\nu$ is denoted by $\mathrm{MCE}(\mu,\nu)$, which is
$$\mathrm{MCE}(\mu,\nu):=\left\{\la\in \La^{d(\mu)\vee d(\nu)}: \mu\alpha=\la=\nu\beta ~~\mathrm{for~ some~} \alpha,\beta\in\La\right\}.$$

A $k$-graph $\La$ is called \emph{row-finite} if $v\La^n$ is finite for all $v\in \La^0$ and $n\in \N^k$. Also, we say $\La$ is \emph{locally convex} in case for every $\la\in \La^{e_i}$ and $1\leq j\neq i \leq k$, $r(\la)\La^{e_j}\neq \emptyset$ implies $s(\la)\La^{e_j}\neq \emptyset$ \cite[Definition 3.10]{rae03}. In this article, we consider only locally convex and row-finite $k$-graphs.

\begin{example}
Let $m\in (\N\cup\{\infty\})^k$ for $k\geq 1$. Then the set $\Omega_{k,m}=\{(p,q)\in\Bbb N^{k}\times\Bbb N^{k}: p\leq q\leq m\}$ equipped with the degree map as $d(p,q)=q-p$, and the range and source maps as $r(p,q)=(p,p)$ and $s(p,q)=(q,q)$ is a row-finite $k$-graph.
\end{example}

A {\it boundary path} of $\La$ is a degree preserving functor $x: \Omega_{k,m}\rightarrow\Lambda$ such that for every $p\in \N^{k}$ and $1\leq i\leq k$, $p\leq m$ and $p_{i}=m_{i}$ imply that $x(p,p)\Lambda^{e_{i}}=\emptyset$. Then we define $d(x)=m$ and usually call $x(0,0)$ {\it range of $x$}, written by $r(x)$. The set of all boundary paths of $\La$ is denoted by $\Lambda^{\leq \infty}$.

There are some definitions for aperiodicity in the literature, which are equivalent for locally convex row-finite $k$-graphs. Following \cite{rae03}, we say that $\La$ is {\it aperiodic} if for each $v\in \La^0$, there exists $x\in v\La^{\leq \infty}$ such that $\alpha\neq \beta\in \La v$ implies $\alpha x\neq \beta x$.

\begin{defn}[{\cite{pin13}}]
Let $\Lambda$ be a locally convex, row-finite $k$-graph and $K$  a field. The {\it Kumjian-Pask algebra} associated to $\La$ is the universal $K$-algebra $\kp$ generated by a family $\{s_\la,s_{\la^*}:\la\in \La\}$ (which is called a {\it Kumjian-Pask $\La$-family}) satisfying the following conditions:
\begin{enumerate}[(KP1)]
  \item $s_v s_w=\delta_{v,w} s_v$ for all $v,w\in\La^0$.
  \item $s_\la s_\mu=s_{\la \mu}$ and $s_{\mu^*}s_{\la^*}=s_{(\la \mu)^*}$ for all $\la,\mu\in\La$ with $s(\la)=r(\mu)$.
  \item $s_{\la^*} s_\mu=\delta_{\la,\mu}s_\la$ for all $\la,\mu\in \La$.
  \item $s_{v}=\sum_{\lambda\in v\Lambda^{\leq n}}s_{\lambda}s_{\lambda^{\ast}}$ for all $v\in \Lambda^0$ and $n\in \N^k$.
\end{enumerate}
\end{defn}
It is show in \cite[Theorem 3.4]{pin13} and \cite[Theorem 3.7(a)]{cla14-2} that the $K$-algebra $\kp$ exists and is unique up to isomorphism.

A subset $H\subseteq \La^0$ is called {\it hereditary} if for every $\la\in \La$, $r(\la)\in H$ implies $s(\la)\in H$, and is called {\it saturated} if for every $v\in \La^0$ and $n\in \N^k$, $\{s(\la): \la\in v\La^{\leq n}\}\subseteq H$ implies $v\in H$. Note that if $H$ is a saturated hereditary subset of $\La^0$, then $\La\setminus \La H$ is a locally convex row-finite $k$-graph \cite[Theorem 5.2 (b)]{rae03}. We say that $\La$ is {\it strongly aperiodic} in case all such quotient $k$-graphs are aperiodic. Recall from \cite[Corollary 5.7]{pin13} that, when $\La$ is strongly aperiodic, every (two-sided) ideal of $\kp$ is of the form
$$I_H:=\mathrm{span}_K\left\{s_\la s_{\mu^*}: \la,\mu\in \La, s(\la)=s(\mu)\in H\right\}$$
for some saturated hereditary subset $H$ of $\La^0$ (see also \cite[Theorem 9.4]{cla14-2}), and we have $\kp/I_H\cong \mathrm{KP}_K(\La\setminus \La H)$ \cite[Proposition 5.5]{pin13}.

%%%%%%%%%%%%%%%%%%%%%%%%%%%%%%%%%%%%%%%%%%%%%%%%

\section{Properly purely infinite Steinberg algebras}

In this section, we characterize properly purely infinite Steinberg algebras $\ag$ by giving necessary and sufficient conditions for it. Our result is the algebraic analogue of \cite[Theorem 4.1]{bon17} and the non-simple version of \cite[Theorem 3.2]{bro19}. Since the concept of proper purely infiniteness is closely related to the ideal and quotient structure \cite[Corollary 2.9]{pin10}, we consider here strongly effective groupoids to apply the results of \cite[Section 3]{cla16}. However, verifying this concept for general groupoids seems to be more complicated, even for the $k$-graph's ones.

Let us first recall some definitions and terminology from \cite{pin10}. Suppose that $R$ is a ring. If $a\in M_m(R)$ and $b\in M_n(R)$ are two squire matrices over $R$, we denote
$$a\oplus b:=\left(
                                                                                                               \begin{array}{cc}
                                                                                                                 a & 0 \\
                                                                                                                 0 & b \\
                                                                                                               \end{array}
                                                                                                             \right) \in M_{m+n}(R).$$
We define $a\precsim b$ if there exist $x\in M_{m,n}(R)$ and $y\in M_{n,m}(R)$ such that $a=xby$. Note that we may consider $a$, $b$, $x$ and $y$ as elements in $M_{m+n}(R)$ by enlarging them by zeros. For $a,b\in R$, we write $a\sim b$ if there exist $r,s\in R$ such that $rs=a$ and $sr=b$, $a\leq b$ if $ab=ba=a$, and $a<b$ if $a\leq b$ and $a\neq b$. A nonzero element $a\in R$ is called {\it infinite} in case $a\oplus x\precsim a$ for some $x\in R$, and {\it properly infinite} in case $a\oplus a\precsim a$ (or equivalently, $a\oplus a\precsim a\oplus 0$). As explained after \cite[Remark 2.7]{pin10}, if $p\in R$ is an idempotent, $p$ is infinite if and only if there is a subidempotent $p< q$ such that $p\sim q$.

\begin{lem}\label{lem3.1}
Let $R$ be a ring and let $p$ and $q$ be two idempotents in $R$. If $p\sim q$ and $p$ is properly infinite, then so is $q$.
\end{lem}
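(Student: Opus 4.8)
The plan is to reduce everything to explicit manipulation of the defining relations. The starting point is to extract, from the hypothesis $p \sim q$, elements $r, s \in R$ with $rs = p$ and $sr = q$, and then to record the two ``conjugation'' identities
\[
p = rqs \qquad\text{and}\qquad q = spr,
\]
both of which follow by a one-line computation from idempotency, e.g. $rqs = r(sr)s = (rs)(rs) = p^2 = p$. These identities say that $p$ and $q$ are carried into one another by left multiplication by $s$ (resp.\ $r$) and right multiplication by $r$ (resp.\ $s$); this is exactly the data I will feed into the $2\times 2$ matrix level. I would also note at the outset that $q \neq 0$, since otherwise $p = rqs = 0$, contradicting the fact that properly infinite elements are by definition nonzero.

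Next I would unwind the hypothesis that $p$ is properly infinite. By definition this means $p \oplus p \precsim p$, so there exist matrices $x \in M_{2,1}(R)$ and $y \in M_{1,2}(R)$ with $p \oplus p = x p y$ (here $p \oplus p \in M_2(R)$ and $p \in M_1(R) = R$). The goal is to produce $X \in M_{2,1}(R)$ and $Y \in M_{1,2}(R)$ with $q \oplus q = X q Y$, which is precisely the assertion $q \oplus q \precsim q$.

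The key step is a diagonal conjugation. Writing $sI_2$ and $rI_2$ for the scalar $2\times 2$ matrices, the identity $q = spr$ gives at once, entrywise on the diagonal,
\[
q \oplus q = (sI_2)\,(p \oplus p)\,(rI_2).
\]
Substituting $p \oplus p = xpy$ and then replacing the middle $p$ by $rqs$ via $p = rqs$, I can regroup the resulting product as
\[
q \oplus q = (sI_2)\,x\,(rqs)\,y\,(rI_2) = \bigl[(sI_2)xr\bigr]\,q\,\bigl[sy(rI_2)\bigr].
\]
Setting $X := (sI_2)xr \in M_{2,1}(R)$ and $Y := sy(rI_2) \in M_{1,2}(R)$ then exhibits $q \oplus q = XqY$, so $q \oplus q \precsim q$ and $q$ is properly infinite, as desired.

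I expect no serious obstacle here; the argument is essentially formal once the two conjugation identities are in hand. The only genuine care needed is bookkeeping of matrix dimensions in the regrouping step, to check that each factor lands in the correct $M_{m,n}(R)$, together with the small but decisive observation that one should insert $p = rqs$ (rather than leaving $p$ in place) precisely so that a single copy of $q$ appears sandwiched between the new outer factors.
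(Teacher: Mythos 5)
Your proof is correct and follows essentially the same route as the paper's: both extract $r,s$ (the paper's $x,y$) with $rs=p$, $sr=q$, conjugate $p\oplus p$ diagonally to get $q\oplus q$, and then substitute $p=rqs$ into the factorization $p\oplus p = xpy$ to regroup a single copy of $q$ in the middle. The only (harmless) addition is your explicit remark that $q\neq 0$, which the paper leaves implicit.
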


\begin{proof}
Suppose that $p=xy$ and $q=yx$ for some $x,y\in R$. Let $p\oplus p=ApB$ where $A\in M_{2,1}(R)$ and $B\in M_{1,2}(R)$. Then we have
\begin{align*}
q\oplus q&=q^2\oplus q^2=(y\oplus y)(xy\oplus xy)(x\oplus x)\\
&=(y\oplus y)(p\oplus p)(x\oplus x)\\
&=(y\oplus y)(ApB) (x\oplus x)\\
&=\big((y\oplus y)Ax\big)q\big(yB (x\oplus x)\big),
\end{align*}
and therefore $q\oplus q\precsim q$ as desired.
\end{proof}

\begin{defn}[{\cite[Definition 3.1]{pin10}}]\label{defn3.2}
Let $R$ be a ring. We say that $R$ is purely infinite if
\begin{enumerate}
  \item each quotient of $R$ is not a division ring, and
  \item for every $a\in R$, $b\in RaR$ implies $b\precsim a$.
\end{enumerate}
Also, we say that $R$ is {\it properly purely infinite} in case all nonzero elements of $R$ are properly infinite.
\end{defn}

According to \cite[Lemma 3.4]{pin10}, every properly purely infinite s-unital ring is purely infinite as well, and we know at least that the converse is also true for exchange cases \cite[Corollary 5.8]{pin10}. Moreover, it follows from \cite[Theorem 7.4]{pin10} that these notions are equivalent for the class of Leavitt path algebras, and in the next section, we will prove a similar result for the class of Kumjian-Pask algebras.

We use the following lemma to prove Theorem \ref{thm3.4} below.

\begin{lem}\label{lem3.3}
Let $\mathcal{G}$ be an ample Hausdorff groupoid and $K$ a field. Then the following are equivalent.
\begin{enumerate}
  \item $\g$ is effective.
  \item Every nonzero ideal of $\ag$ contains a nonzero idempotent $1_V$ for some compact open subset $V\subseteq \go$.
  \item Every nonzero one-sided ideal of $\ag$ contains a nonzero idempotent $p$ such that $p\sim 1_V$ for some compact open subset $V\subseteq \go$.
\end{enumerate}
\end{lem}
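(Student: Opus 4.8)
The plan is to prove the cycle of implications $(1)\Rightarrow(3)\Rightarrow(2)\Rightarrow(1)$, which yields all three equivalences at once.

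For $(1)\Rightarrow(3)$, let $L$ be a nonzero one-sided ideal and $0\neq a\in L$. The first step is to push a nonzero value of $a$ onto the unit space. Picking $\gamma_0$ with $c:=a(\gamma_0)\neq 0$ and a compact open bisection $B\ni\gamma_0$, a direct computation gives $(a1_{B^{-1}})(r(\gamma_0))=c$; so for a right ideal $a':=a1_{B^{-1}}\in L$ has nonzero restriction to $\go$ (for a left ideal one takes $a':=1_{B^{-1}}a$, with value $c$ at $s(\gamma_0)$). The technically central step is then a localization lemma: using effectiveness I produce a nonempty compact open $V\subseteq\go$ with $1_Va'1_V=c'1_V$ for some $0\neq c'\in K$. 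To do this I write $a'$ as a finite sum of scalar multiples of characteristic functions of disjoint compact open bisections, isolating its diagonal part (supported on a nonempty open $O\subseteq\go$) from the off-diagonal pieces $B_1,\dots,B_n$, each with $B_i\cap\go=\emptyset$. Writing $\phi_{B_i}\colon s(B_i)\to r(B_i)$ for the induced homeomorphism, the fixed-point set $\mathrm{Fix}(B_i)=s(B_i\cap\mathrm{Iso}(\g))$ is closed in $\go$, and effectiveness ($\mathrm{int}(\mathrm{Iso}(\g))=\go$) forces it to have empty interior; hence $O\setminus\bigcup_i\mathrm{Fix}(B_i)$ is nonempty, and choosing $x$ there with $c':=a'(x)\neq 0$, I shrink a neighbourhood $V\ni x$ inside the diagonal support so that $\phi_{B_i}(y)\notin V$ for all $y\in V\cap s(B_i)$ and all $i$, i.e. $\g|_V\cap B_i=\emptyset$. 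This kills the off-diagonal pieces and leaves $1_Va'1_V=c'1_V$.

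The remainder of $(1)\Rightarrow(3)$ is algebraic. Setting $e:=a'1_V\in L$, the relation $1_Va'1_V=c'1_V$ gives $e^2=c'e$, so $f:=\tfrac{1}{c'}e$ is a nonzero idempotent in $L$; and since $f1_V=f$ and $1_Vf=1_V$, we obtain $f\sim 1_V$ (the left-ideal case is symmetric with $e:=1_Va'$). The implication $(3)\Rightarrow(2)$ is then immediate: a nonzero two-sided ideal $I$ is a one-sided ideal, so it contains a nonzero idempotent $p=rs$ with $sr=1_V$, whence $1_V=1_V^2=(sr)(sr)=s(rs)r=spr\in I$, with $1_V\neq 0$ because otherwise $p=p^2=r(sr)s=0$.

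For $(2)\Rightarrow(1)$ I argue contrapositively. If $\g$ is not effective then $\mathrm{int}(\mathrm{Iso}(\g))\supsetneq\go$, and ampleness yields a nonempty compact open bisection $B\subseteq\mathrm{Iso}(\g)\setminus\go$ with $U:=s(B)=r(B)$ and $\phi_B=\mathrm{id}_U$. I consider the nonzero ideal $I$ generated by $d:=1_U-1_B$ together with the representation $\rho$ of $\ag$ on $A_K(\go)$ given by $(\rho(f)h)(x)=\sum_{r(\beta)=x}f(\beta)h(s(\beta))$, which a direct check shows to be an algebra homomorphism carrying $A_K(\go)$ into itself. Because $B\subseteq\mathrm{Iso}(\g)$, both $\rho(1_U)$ and $\rho(1_B)$ equal multiplication by $1_U$, so $\rho(d)=0$ and hence $\rho(I)=0$; but $\rho(1_V)$ is multiplication by $1_V\neq 0$ for every nonempty $V$, so no $1_V$ lies in $I$, contradicting $(2)$. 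I expect the localization lemma in $(1)\Rightarrow(3)$ to be the main obstacle: the nowhere-density of the fixed-point sets $\mathrm{Fix}(B_i)$, where effectiveness is genuinely used, is the algebraic analogue of the Cuntz--Krieger uniqueness theorem, whereas the representation $\rho$ makes $(2)\Rightarrow(1)$ comparatively routine.
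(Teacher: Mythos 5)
Your proposal is correct and takes essentially the same route as the paper: the same cycle $(1)\Rightarrow(3)\Rightarrow(2)\Rightarrow(1)$, with your localization argument (killing the off-diagonal bisections $B_i$ by choosing $V$ off the nowhere-dense fixed-point sets $s(B_i\cap\mathrm{Iso}(\g))$) being precisely the content of the proof of Theorem 3.2 of Brown--Clark--an Huef that the paper cites for $(1)\Rightarrow(3)$, and your $1_U-1_B$ construction being the standard argument behind the cited Lemma 3.4 of Clark--Edie-Michell--an Huef--Sims for $(2)\Rightarrow(1)$. The only real difference is that you write out in full the details the paper outsources to those two references; all the steps check out.
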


\begin{proof}
(1) $\Longrightarrow$ (3). Let $I$ be a nonzero right ideal of $\ag$. Fix a nonzero element $a\in I$. Then, by \cite[Lemma 2.2]{cla15}, we may write
$$a=\sum_{B\in \mathcal{F}}r_B 1_B$$
where $\mathcal{F}$ is a finite collection of mutually disjoint compact open bisections. We may follow the proof of \cite[Theorem 3.2]{bro19} to find an idempotent $p\in I$ and a compact open $V\subseteq \go$ such that $p\sim 1_V$, as desired. For left ideals, we can  argue  similarly.

(3) $\Longrightarrow$ (2). Let $I$ be a nonzero ideal of $\ag$. According to (3), $I$ contains a nonzero element $p$ such that $p\sim 1_V$ for some compact open $V\subseteq \go$. If $p=xy$ and $1_V=yx$ for some $x,y\in \ag$, then we have
$$1_V=1_V^2=y(xy)x=ypx\in I,$$
giving (2).

(2) $\Longrightarrow$ (1). This follows from \cite[Lemma 3.4]{cla16}.
\end{proof}

\begin{thm}\label{thm3.4}
Let $\g$ be a strongly effective, ample and Hausdorff groupoid and let $K$ be a field. Suppose that $\mathcal{B}$ is a basis of compact open sets for $\go$. Then the following are equivalent:
\begin{enumerate}
  \item $\ag$ is properly purely infinite.
  \item For every $V\in \mathcal{B}$, $1_V$ is properly infinite.
  \item For every invariant open subset $U\subseteq \go$ and any nonempty relatively compact open subset $V\subseteq D:=\go\setminus U$, $1_V$ is an infinite idempotent in $A_K(\g_D)$.
  \item Every nonzero one-sided ideal in any quotient of $\ag$ contains an infinite idempotent.
\end{enumerate}
In particular, if one of the above statements holds (and so all), then $\ag$ is purely infinite as well.
\end{thm}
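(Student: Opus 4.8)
The plan is to establish the cyclic chain $(1)\Rightarrow(2)\Rightarrow(3)\Rightarrow(4)\Rightarrow(1)$, and then to read off the final ``in particular'' statement from s-unitality: once (1) holds, $\ag$ is properly purely infinite and s-unital, so it is purely infinite by \cite[Lemma 3.4]{pin10}. The implication $(1)\Rightarrow(2)$ is immediate, since each nonzero $1_V$ is then a nonzero element of a properly purely infinite ring.

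For $(2)\Rightarrow(3)$ I would work through the quotient description from \cite{cla16}. Writing $D=\go\setminus U$, the restriction map $q(f)=f|_{\g_D}$ identifies $\ag/I_U$ with $A_K(\g_D)$. Given a nonempty relatively compact open $V\subseteq D$, I would cover it by members of $\mathcal{B}$ chosen small enough to meet $D$ inside $V$; compactness then yields a compact open $W\subseteq\go$, a finite disjoint union of $\mathcal{B}$-sets, with $q(1_W)=1_V$. A short computation shows proper infiniteness is additive over orthogonal idempotents $1_{V_1},\dots,1_{V_n}$ (via the block dominations $1_{V_1}\oplus\cdots\oplus 1_{V_n}\precsim\sum_i 1_{V_i}\precsim 1_{V_1}\oplus\cdots\oplus 1_{V_n}$ together with $\oplus$-associativity), so (2) forces $1_W$ to be properly infinite. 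Since ring homomorphisms preserve $\precsim$, the image $1_V=q(1_W)$ is properly infinite, hence, being a nonzero idempotent, infinite, which is (3).

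The step $(3)\Rightarrow(4)$ uses that every quotient of $\ag$ is again of the form $A_K(\g_D)$ with $\g_D$ strongly effective, in particular effective. Applying the implication $(1)\Rightarrow(3)$ of Lemma \ref{lem3.3} inside $A_K(\g_D)$, any nonzero one-sided ideal contains a nonzero idempotent $p$ with $p\sim 1_V$ for some compact open $V\subseteq D$; by (3) this $1_V$ is infinite, and an argument parallel to Lemma \ref{lem3.1} (transporting a proper subidempotent across the equivalence $p\sim 1_V$, using the corner isomorphism it induces) shows infiniteness is preserved under $\sim$, so $p$ is infinite.

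The heart of the proof is $(4)\Rightarrow(1)$. Here I would exploit that proper infiniteness is detected quotient by quotient, so that to prove a nonzero $a\in\ag$ satisfies $a\oplus a\precsim a$ it suffices to control its images in every $A_K(\g_D)$. Fixing such a quotient, I would put the image in the normal form $\sum_{B\in\mathcal{F}}r_B 1_B$ of \cite[Lemma 2.2]{cla15}; taking $W$ to be the compact open union of the ranges and sources of the $B\in\mathcal{F}$ gives $a\precsim 1_W$ and $\mathrm{ideal}(a)=\mathrm{ideal}(1_W)$, while effectiveness of $\g_D$ (as in the proof of Lemma \ref{lem3.3} and of \cite[Theorem 3.2]{bro19}) produces a nonempty compact open $V\subseteq D$ with $1_V\precsim a$. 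Condition (4), applied in this quotient, then supplies the infinite idempotents needed to make $1_W$ properly infinite and to verify $1_W\precsim a$ (using $1_W\in\mathrm{ideal}(a)$), after which the sandwich $a\oplus a\precsim 1_W\oplus 1_W\precsim 1_W\precsim a$ closes the argument. The main obstacle is precisely this upgrade from \emph{infinite} to \emph{properly infinite}: a single infinite idempotent $\precsim a$ controls only one ideal layer, so I expect to need strong effectiveness in an essential way, guaranteeing that every quotient is again an effective Steinberg algebra in which (4) and Lemma \ref{lem3.3} apply, and hence that the required dominations can be arranged uniformly across all the layers through which $a$ spreads.
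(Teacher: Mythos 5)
Your chain $(1)\Rightarrow(2)\Rightarrow(3)\Rightarrow(4)$ follows the paper's route essentially step for step: $(2)\Rightarrow(3)$ via the identification $\ag/I_U\cong A_K(\g_D)$ and a compact open $W\subseteq\go$ with $q(1_W)=1_{\widetilde V}$, and $(3)\Rightarrow(4)$ via Lemma \ref{lem3.3} applied in the effective quotient groupoid $\g_D$ (if anything you are more explicit than the paper, which silently treats the finite union of basis sets as if it were itself covered by item (2), and silently transfers infiniteness across $\sim$). The genuine problem is your treatment of $(4)\Rightarrow(1)$. The paper disposes of this step in one line by citing \cite[Proposition~3.13]{pin10}, a purely ring-theoretic result asserting exactly that a ring in which every nonzero one-sided ideal of every quotient contains an infinite idempotent is properly purely infinite. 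You instead attempt a direct argument, and that argument is circular in two places. First, you infer $1_W\precsim a$ from $1_W\in \ag a\ag$; but ``$b\in RaR$ implies $b\precsim a$'' is clause (2) of Definition \ref{defn3.2}, i.e.\ it is part of the conclusion you are trying to establish, and it is false in general rings. Second, the middle link $1_W\oplus 1_W\precsim 1_W$ of your sandwich presupposes that $1_W$ is \emph{properly} infinite, and the statement that condition (4) ``supplies the infinite idempotents needed'' is not an argument: (4) only guarantees that each one-sided ideal contains \emph{some} infinite idempotent, and upgrading this to proper infiniteness of a prescribed idempotent $1_W$ (let alone of an arbitrary nonzero element $a$, which need not be an idempotent at all) is precisely the nontrivial content of \cite[Proposition~3.13]{pin10}. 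You correctly identify this upgrade as ``the main obstacle,'' but your sketch does not surmount it.

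The repair is straightforward: either quote \cite[Proposition~3.13]{pin10} as the paper does, or reproduce its proof (which builds infinitely many mutually orthogonal, mutually equivalent idempotents from a single infinite one and then runs a comparison argument through each quotient); nothing groupoid-specific is needed for that step. The remaining ingredients of your write-up --- the s-unitality of $\ag$ giving the final ``in particular'' via \cite[Lemma~3.4]{pin10}, the additivity of proper infiniteness over orthogonal idempotents, and the invariance of (proper) infiniteness under $\sim$ as in Lemma \ref{lem3.1} --- are all correct and consistent with the paper.
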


\begin{proof}
(1) $\Longrightarrow$ (2) is immediate.

(2) $\Longrightarrow$ (3). Let $U$ be an invariant open subset of $\go$ and write $D:=\go\setminus U$. Given any relatively compact open $\widetilde{V}\subseteq D$, there is an open subset $V\subseteq \go$ such that $\widetilde{V}=V\cap D$. We may also suppose that $V$ is compact. Indeed,  since $\mathcal{B}$ is a basis for $\go$, we can write $V=\bigcup_{C\in\mathcal{C}}C$ for some subcollection $\mathcal{C}\subseteq \mathcal{B}$. Then $\widetilde{V}=\bigcup_{C\in\mathcal{C}}C\cap D$. As $\widetilde{V}$ is compact, there are finitely many $C_1,\ldots,C_n$ in $\mathcal{C}$ such that $\widetilde{V}\subseteq \bigcup_{i=1}^n C_n\cap D$. Thus, we can replace $V$ with $\bigcup_{i=1}^n C_n$ and assume that $V$ is a compact and open subset of $\go$.

Now part (2) says that $1_V$ is a properly infinite idempotent in $\ag$. Thus, since $1_{\widetilde{V}}$ is the image of $1_V$ in the quotient $A_K(\g_D)\cong \ag/I_U$, \cite[Corollary 2.9]{pin10} implies that $1_{\widetilde{V}}$ is an infinite idempotent in $A_K(\g_D)$.

(3) $\Longrightarrow$ (4). Let $I$ be an ideal of $\ag$. By \cite[Theorem 3.1]{cla16}, there is an open invariant $U\subseteq \go$ such that $I=I_U$, and we have $\ag/I\cong A_K(\g_D)$ for $D:=\go\setminus U$ \cite[Lemma 3.6]{cla16}.

Now fix an arbitrary nonzero one-sided ideal $J$ of $A_K(\g_D)$. Since $\g_D$ is effective, Lemma \ref{lem3.3} implies that $J$ contains a nonzero idempotent $p$ such that $p\sim 1_V$ for some relatively compact open subset $V$ of $D$. Since $1_V$ is infinite in $A_K(\g_D)$ \cite[Corollary 2.9]{pin10}, then so is $p$ as desired.

(4) $\Longrightarrow$ (1) follows from \cite[Proposition 3.13]{pin10}, completing the proof.
\end{proof}

\begin{defn}[{\cite[Definition 2.1]{ana97}}]
Let $\g$ be an ample Hausdorff groupoid. We say that $\g$ is \emph{locally contracting} if for every compact open $V\subseteq \go$, there exists a compact open bisection $B\subseteq \g$ such that
$$s(B)\subsetneq r(B)\subseteq V.$$
\end{defn}

Note that if $s(B)\subsetneq r(B)$, then we have
$$1_{s(B)}=1_{B^{-1}B}=1_{B^{-1}}1_B  \hspace{5mm} \mathrm{and} \hspace{5mm} 1_{r(B)}=1_{B B^{-1}}=1_B 1_{B^{-1}}.$$
Using $1_{s(B)} 1_{r(B)}=1_{s(B)}=1_{r(B)} 1_{s(B)}$, this follows
$1_{s(B)}\lneq 1_{r(B)}\sim 1_{s(B)}$, giving that $1_{r(B)}$ is an infinite idempotent in $\ag$.

\begin{cor}\label{cor3.6}
Let $\g$ be a strongly effective, ample Hausdorff groupoid and let $K$ be a field. If for every open invariant $U\subseteq \go$, the groupoid $\g_{\go\setminus U}$ is locally contractive, then $\ag$ is properly purely infinite.
\end{cor}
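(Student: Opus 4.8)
The plan is to deduce the corollary from the implication (3) $\Longrightarrow$ (1) of Theorem \ref{thm3.4}. Thus it suffices to show that for every invariant open subset $U\subseteq \go$ and every nonempty relatively compact open subset $V\subseteq D:=\go\setminus U$, the characteristic function $1_V$ is an infinite idempotent in $A_K(\g_D)$. Since $D$ is closed in $\go$ and $V$ is relatively compact and open in $D$, exactly as in the proof of (2) $\Longrightarrow$ (3) of Theorem \ref{thm3.4} we may assume that $V$ is a compact open subset of the unit space $D$ of $\g_D$, so that $1_V$ is a genuine idempotent of $A_K(\g_D)$.

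Fix such $U$ and $V$. By hypothesis the restricted groupoid $\g_D=\g_{\go\setminus U}$ is locally contracting, and its unit space is precisely $D$. Applying the local contraction property to the compact open set $V\subseteq D$, I obtain a compact open bisection $B\subseteq \g_D$ with $s(B)\subsetneq r(B)\subseteq V$. The observation following the definition of locally contracting groupoids then shows that $1_{r(B)}$ is an infinite idempotent; but this only gives infiniteness of a subidempotent of $1_V$, and the remaining task is to promote it to $1_V$ itself.

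The key step is to enlarge $B$ to a bisection whose range is all of $V$. Since $V$ and $r(B)$ are compact open in $D$, the set $V\setminus r(B)$ is compact open, and it lies in the unit space $D\subseteq \g_D$. I would set
$$B':=B\cup (V\setminus r(B)).$$
One checks that $B'$ is again a compact open bisection of $\g_D$: its range is $r(B)\cup (V\setminus r(B))=V$, while its source is $s(B)\cup (V\setminus r(B))$; the required injectivity of $r|_{B'}$ and $s|_{B'}$ follows from the disjointness $r(B)\cap (V\setminus r(B))=\emptyset$ together with $s(B)\subseteq r(B)$, which forces $s(B)\cap (V\setminus r(B))=\emptyset$. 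Moreover $s(B')=s(B)\cup (V\setminus r(B))$ is a \emph{proper} subset of $r(B')=V$, since $r(B)\setminus s(B)\neq\emptyset$.

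Hence $s(B')\subsetneq r(B')=V$, and applying the same observation to $B'$ yields $1_{s(B')}\lneq 1_{V}\sim 1_{s(B')}$, so that $1_V=1_{r(B')}$ is an infinite idempotent in $A_K(\g_D)$. This verifies condition (3) of Theorem \ref{thm3.4}, whence $\ag$ is properly purely infinite (and, in particular, purely infinite). The only delicate point is the verification that $B'$ is a legitimate compact open bisection; once the disjointness relations above are recorded this is routine, and it neatly bypasses any need for a separate lemma asserting that an idempotent dominating an infinite idempotent is itself infinite.
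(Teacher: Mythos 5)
Your proof is correct, and its overall skeleton matches the paper's: both arguments verify condition (3) of Theorem \ref{thm3.4} by applying the local contractiveness of $\g_D$ to (a compact open version of) $V$, obtaining a compact open bisection $B$ with $s(B)\subsetneq r(B)\subseteq V$, and then concluding via the implication (3) $\Rightarrow$ (1). Where you diverge is in the final transfer of infiniteness from $r(B)$ up to $V$. The paper applies the observation following the definition of locally contracting groupoids to get that $1_{r(B)}$ is infinite in $A_K(\g_D)$, and then invokes the monotonicity fact that an idempotent dominating an infinite idempotent is itself infinite (this is the step ``by $1_{r(B)}\leq 1_V$, then so is $1_V$ as well,'' which is essentially \cite[Corollary 3.9(iii)]{pin10}, used explicitly elsewhere in the paper). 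You instead enlarge the bisection, setting $B':=B\cup(V\setminus r(B))$, and apply the same observation directly to $B'$, whose range is exactly $V$. Your verification of the enlargement is sound: $V\setminus r(B)$ is compact open because $r(B)$ is compact open, hence clopen in the Hausdorff unit space $D$; the injectivity of $r|_{B'}$ and $s|_{B'}$ follows from the disjointness relations you record (using $s(B)\subseteq r(B)$); continuity plus compactness plus Hausdorffness upgrade these injections to homeomorphisms onto their images; and $r(B)\setminus s(B)\neq\emptyset$ gives the strict inclusion $s(B')\subsetneq r(B')=V$. The trade-off is clear: the paper's route is shorter because it leans on a standard comparison lemma for idempotents, while yours is self-contained at the groupoid level and, as you note, bypasses that lemma at the cost of a routine but nontrivial check that $B'$ is a legitimate compact open bisection. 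One shared loose end: condition (3) of Theorem \ref{thm3.4} speaks of \emph{relatively} compact open $V\subseteq D$, while both local contractiveness and the expression $1_V\in A_K(\g_D)$ require $V$ compact open; your explicit reduction to compact open $V$ (mirroring the (2) $\Rightarrow$ (3) argument) handles this point, which the paper's own proof passes over silently.
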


\begin{proof}

Let $U$ be an invariant open subset of $\go$, and let $V$ be a relatively compact open subset of $D:=\go\setminus U$. Since $\g_D$ is locally contractive, there is some compact open bisection $B\subseteq \go$ such that $s(B)\subsetneq r(B)\subseteq V$. Hence $1_{r(B)}$ is infinite in $A_K(\g_D)$ as explained above, and by $1_{r(B)}\leq 1_V$, then so is $1_V$ as well. Now (3) $\Rightarrow$ (1) in Theorem \ref{thm3.4} concludes the result.
\end{proof}

%%%%%%%%%%%%%%%%%%%%%%%%%%%%%%%%%%%%%%%%%%%%%%%%

\section{Applications to Kumjian-Pask algebras}

According to \cite[Proposition 4.3]{cla14} and \cite[Proposition 5.4]{cla17}, every Kumjian-Pask algebra $\kp$ may be considered as the Steinberg algebra $A_K(\g_\La)$ where $\g_\La$ is the boundary path groupoid of $\La$. So, in view of Theorem \ref{thm3.4}, we may investigate the proper pure infiniteness of Kumjian-Pask algebras by groupoid approach. Moreover, \cite[Theorem 7.4]{pin10} says that the notions of purely infinite and properly purely infinite are equivalent for Leavitt path algebras. Here, we will also generalize this result to the class of Kumjian-Pask algebras by a quite different proof.

Let us first follow \cite[Section 2]{kum00} and briefly review the construction of the boundary path groupoid $\g_\La$ associated to $\La$. (Although the groupoid $\g_\La$ of \cite[Section 2]{kum00} is constructed for a row-finite $k$-graph with no sources, however we may easily generalize it for any row-finite $k$-graph.)  So, suppose that $\La$ is a fixed locally convex row-finite $k$-graph. We set
$$\g_\La:=\left\{(\la x,d(\la)-d(\mu), \mu x): \la,\mu\in\La, s(\la)=s(\mu), x\in s(\la)\La^{\leq \infty} \right\}$$
and define the range and source maps by $r(x,m,y):=x$ and $s(x,m,y):=y$. If we consider the composition as $(x,m,y)(y,n,z):=(x,m+n,z)$ and the inversion as $(x,m,y)^{-1}:=(y,-m,x)$, then $\g_\La$ is a groupoid. We usually identify the unit space $\g_\La^{(0)}$ with the boundary path space $\La^{\leq\infty}$ by $(x,0,x)\leftrightarrow x$. Moreover, we may equip $\g_\La$ with a locally compact Hausdorff topology induced from that of $\La^{\leq \infty}$. Indeed, for each $\la,\mu\in\La$ with $s(\la)=s(\mu)$ we set
$$Z(\la*_s \mu):=\left\{(\la x,d(\la)-d(\mu),\mu x):x\in s(\la)\La^{\leq \infty}\right\}.$$
When $\la=\mu$, we write simply $Z(\la)$ for $Z(\la*_s\la)$. Then \cite[Proposition 2.8]{kum00} shows that the collection $\{Z(\la*_s \mu):\la,\mu\in \La, s(\la)=s(\mu)\}$ forms a basis of compact open bisections for a Hausdorff topology on $\g_\La$, making it an ample groupoid. Moreover, an application of the graded uniqueness theorem implies that $\kp\cong A_K(\g_\La)$, which maps $s_\la$ to $1_{Z(\la*_s s(\la))}$ and $s_{\la^*}$ to $1_{Z(s(\la)*_s \la)}$ (cf. \cite [Proposition 4.3]{cla14} and \cite[Proposition 5.4]{cla17}).

Before Theorem \ref{thm4.2}, we state a lemma.

\begin{lem}\label{lem4.1}
Let $R$ be a purely infinite ring and let $p$ be an idempotent in $R$. If there are mutually orthogonal idempotents $q_1,q_2\in R$ such that $p\in Rq_i R$, then $p$ is properly infinite.
\end{lem}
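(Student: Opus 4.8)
The plan is to use the defining property of pure infiniteness twice, with an explicit construction of orthogonal subidempotents in between. First I would invoke condition (2) of Definition \ref{defn3.2}: since $p\in Rq_iR$ for $i=1,2$, we obtain $p\precsim q_i$, and because $p$ and $q_i$ are genuine ring elements this comparison collapses to a single-term factorisation $p=x_iq_iy_i$ with $x_i,y_i\in R$ (enlarging $q_i$ by zeros only reproduces one summand $x q_i y$). Recording this reduction first is the small technical point that keeps the rest clean. We may of course assume $p\neq 0$, the zero case being excluded.

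The heart of the argument is to manufacture, for each $i$, an idempotent that is equivalent to $p$, sits under $q_i$, and simultaneously lies in $RpR$. Concretely I set $u_i:=px_iq_i$, $v_i:=q_iy_ip$, and $e_i:=v_iu_i=q_iy_ipx_iq_i$. A direct check gives $u_iv_i=p$ and $v_iu_i=e_i$, so $e_i$ is an idempotent with $p\sim e_i$; moreover $q_ie_i=e_iq_i=e_i$, i.e. $e_i\leq q_i$, and, crucially, $e_i=(q_iy_i)\,p\,(x_iq_i)\in RpR$ because $p$ sits in the middle of the word. Keeping $p$ in the middle is the whole trick: it is what will let me return to $p$ at the end. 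Orthogonality of $q_1,q_2$ then forces $e_1e_2=e_1q_1q_2e_2=0$ and likewise $e_2e_1=0$, so $e_1\perp e_2$ and $e_1+e_2$ is an idempotent.

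Finally I would assemble the comparisons. From $p\sim e_1$ and $p\sim e_2$ I get $p\oplus p\sim e_1\oplus e_2$, and orthogonality gives $e_1\oplus e_2\sim e_1+e_2$ (implemented by $\big(\begin{smallmatrix}e_1\\ e_2\end{smallmatrix}\big)$ and $(e_1\ e_2)$), so $p\oplus p\precsim e_1+e_2$. Since $e_1,e_2\in RpR$ we have $e_1+e_2\in RpR$, and a second application of condition (2) yields $e_1+e_2\precsim p$; transitivity of $\precsim$ then delivers $p\oplus p\precsim p$, which is exactly proper infiniteness of $p$. The main obstacle, and the step I expect to need the most care, is precisely the middle one: a naive use of pure infiniteness only produces $p\precsim q_i$ (the wrong direction to conclude anything about $p$ itself), so one must engineer subidempotents that are \emph{both} equivalent to $p$ \emph{and} located inside the ideal $RpR$. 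Without forcing $e_i\in RpR$, the concluding comparison back to $p$ cannot be closed, and the orthogonality hypothesis on $q_1,q_2$ would be left unused.
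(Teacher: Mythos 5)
Your proof is correct and follows essentially the same route as the paper's: from the factorizations $p=x_iq_iy_i$ produce orthogonal idempotents $e_i\leq q_i$ with $e_i\sim p$, pass from $e_1\oplus e_2$ to $e_1+e_2$, and apply pure infiniteness a second time to compare the sum back down to $p$. Your explicit insertion of $p$ in the middle, so that $e_i=q_iy_ip\,x_iq_i$ is visibly idempotent and visibly lies in $RpR$, is in fact slightly cleaner than the paper's choice $x_i:=q_ib_ia_iq_i$, which is an idempotent only after the tacit normalization $a_i\mapsto pa_i$, $b_i\mapsto b_ip$ (the paper then compares $x_1+x_2$ to $x_1$ rather than directly to $p$, a cosmetic difference).
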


\begin{proof}
Since $R$ is purely infinite, $p\in Rq_i R$ implies $p\precsim q_i$ for $i\in \{1,2\}$. Suppose $p=a_i q_i b_i$. Putting $x_i:=q_ib_i a_i q_i$, then $x_1,x_2$ are orthogonal idempotents in $R$ such that $x_1\sim p\sim x_2$. Note that if $x_1=rs$ and $x_2=sr$ for $r,s\in R$, then $x_2=x_2^2=s(rs)r=rx_1 s$. Hence $x_1+x_2$ lies in $Rx_1 R$, and the pure infiniteness forces $x_1+x_2\precsim x_1$. Moreover, for
$A=\left(
     \begin{array}{c}
       x_1 \\
       x_2 \\
     \end{array}
   \right)$
and
$B=\left(
     \begin{array}{cc}
       x_1 & x_2 \\
     \end{array}
   \right)
$,
we have $AB=x_1\oplus x_2$ and $BA=x_1+ x_2$. Therefore, we get
$$p\oplus p\sim x_1\oplus x_2\sim x_1 +x_2\precsim x_1 \sim p,$$
establishing $p\oplus p\precsim p$.
\end{proof}

\begin{thm}\label{thm4.2}
Let $\La$ be a locally convex row-finite $k$-graph and $K$ a field. Suppose that $\La$ is strongly aperiodic. Then the following are equivalent.
\begin{enumerate}
  \item $\kp$ is purely infinite.
  \item $\kp$ is properly purely infinite.
  \item For each vertex $v\in \La^0$, $s_v$ is a properly infinite idempotent.
\end{enumerate}
\end{thm}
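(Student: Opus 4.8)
The plan is to prove the cycle of implications $(1)\Rightarrow(3)\Rightarrow(2)\Rightarrow(1)$, together with the trivial observation $(2)\Rightarrow(3)$. Throughout I identify $\kp$ with the Steinberg algebra $A_K(\g_\La)$ via \cite{cla14,cla17}, so that $s_v$ becomes $1_{Z(v)}$, each $s_\la s_{\la^*}$ becomes $1_{Z(\la)}$, and the cylinders $\{Z(\la):\la\in\La\}$ form a basis of compact open subsets of $\go=\La^{\leq\infty}$. Since $\La$ is strongly aperiodic, the groupoid $\g_\La$ is strongly effective, so that Theorem \ref{thm3.4} is available for $A_K(\g_\La)$.

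The two easy steps come first. For $(2)\Rightarrow(1)$, recall that $\kp$ has local units, hence is s-unital, so proper pure infiniteness implies pure infiniteness by \cite[Lemma 3.4]{pin10}; and $(2)\Rightarrow(3)$ is immediate, since every nonzero element, in particular each $s_v$, is then properly infinite. For $(3)\Rightarrow(2)$ I would invoke Lemma \ref{lem3.1}: for each $\la\in\La$ the Kumjian--Pask relations give $s_\la s_{\la^*}\sim s_{\la^*}s_\la=s_{s(\la)}$, which is properly infinite by $(3)$, so $1_{Z(\la)}=s_\la s_{\la^*}$ is properly infinite. As these functions run over the basis $\{Z(\la)\}$, the implication $(2)\Rightarrow(1)$ of Theorem \ref{thm3.4} yields that $\kp$ is properly purely infinite.

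The substance lies in $(1)\Rightarrow(3)$, and here Lemma \ref{lem4.1} dictates the strategy: fixing $v\in\La^0$, it suffices to exhibit two \emph{mutually orthogonal} idempotents $q_1,q_2\in\kp$ with $s_v\in\kp q_i\kp$ for $i=1,2$, after which Lemma \ref{lem4.1} forces $s_v$ to be properly infinite. I would manufacture these from two paths $\mu_1,\mu_2\in\La v$ ending at $v$ whose cylinders are disjoint, $Z(\mu_1)\cap Z(\mu_2)=\emptyset$: setting $q_i:=s_{\mu_i}s_{\mu_i^*}$, disjointness of the cylinders gives $q_1q_2=q_2q_1=1_{Z(\mu_1)\cap Z(\mu_2)}=0$, while $s(\mu_i)=v$ means that the ideal $\kp q_i\kp$ equals $I_{H}$ for $H$ the saturated hereditary closure of $v$, which contains $s_v$. (For instance, if $v$ receives a nontrivial path $\mu$ with $r(\mu)\neq v$, one may simply take $\mu_1=v$ and $\mu_2=\mu$.)

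The main obstacle is therefore the combinatorial claim that pure infiniteness forces every vertex $v$ to receive two paths with disjoint cylinders. I would argue by contraposition: if all paths ending at $v$ are pairwise compatible, then the paths into $v$ form a single ray, so $v$ lies at the bottom of a line or of a bare cycle. Strong aperiodicity excludes the bare cycle, and in the remaining case I would extract a contradiction to Definition \ref{defn3.2}: a division-ring quotient when the ray is finite (violating condition (1)), or a vertex idempotent $s_w\in\kp s_v\kp$ with $s_w\not\precsim s_v$ when the ray is infinite (violating condition (2)). Controlling the saturation step --- which is precisely what pulls $v$ into the relevant hereditary closures and makes $\kp q_i\kp$ contain $s_v$ --- is the delicate point; once the claim is established, closing the cycle $(1)\Rightarrow(3)\Rightarrow(2)\Rightarrow(1)$ completes the proof.
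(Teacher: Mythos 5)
Your identification $\kp\cong A_K(\g_\La)$, the implication (3)$\Rightarrow$(2) via Lemma \ref{lem3.1} and Theorem \ref{thm3.4}, and (2)$\Rightarrow$(1) via \cite[Lemma 3.4]{pin10} all coincide with the paper's argument and are correct. The gap is in (1)$\Rightarrow$(3), and it is fatal to the route you chose: the combinatorial claim on which it rests is false. Your construction needs, for each vertex $v$, two paths $\mu_1,\mu_2\in\La v$ (source $v$ --- this is what makes $s_v=s_{\mu_i^*}q_is_{\mu_i}\in\kp q_i\kp$ automatic) with $Z(\mu_1)\cap Z(\mu_2)=\emptyset$. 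Pure infiniteness does not supply such paths. Take the $1$-graph with $\La^0=\{v,w\}$, one edge $g$ with $r(g)=v$, $s(g)=w$, and two loops $e_1,e_2$ at $w$. Here $\{w\}$ is hereditary but not saturated (since $v\La^{\leq e_1}=\{g\}$ has source $w$), so the only saturated hereditary sets are $\emptyset$ and $\La^0$; the graph is strongly aperiodic, and by Proposition \ref{prop4.6} the algebra $\kp$ is purely infinite --- indeed $s_v=s_gs_{g^*}\sim s_w$ is properly infinite. Yet $\La v=\{v\}$: no nontrivial path has source $v$, so there is no pair $\mu_1,\mu_2$ to feed into Lemma \ref{lem4.1}, and your fallback choice $\mu_1=v$, $\mu_2=\mu$ does not exist either. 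Your contrapositive sketch also quietly switches direction: you speak of ``paths ending at/into $v$'', i.e.\ elements of $v\La$, but orthogonal idempotents built from $\mu_i\in v\La$ sit \emph{under} $s_v$ and generate ideals $I_H$ with $H$ the saturated hereditary closure of $s(\mu_i)$, which need not contain $v$ --- this is exactly the ``saturation step'' you flag but never resolve.

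The paper's proof shows why quotients are unavoidable and how to dodge this. By \cite[Corollary 2.9]{pin10}, an idempotent is properly infinite iff its image in every quotient is zero or infinite; so one fixes an arbitrary ideal $I_H$ with $v\notin H$ and works in $\mathrm{KP}_K(\Gamma)$, $\Gamma=\La\setminus H\La$, which is again purely infinite. The key move is to apply Lemma \ref{lem4.1} not at $v$ but at a vertex $w$ \emph{downstream} of $v$: if two distinct vertices $v\neq u$ admit paths $\mu_1\in v\Gamma w$ and $\mu_2\in u\Gamma w$, then $q_i=t_{\mu_i}t_{\mu_i^*}$ are orthogonal because their ranges differ, and $t_w=t_{\mu_i^*}q_it_{\mu_i}$ lies in both ideals, so $t_w$ is properly infinite; then $t_v\geq q_1\sim t_w$ gives $t_v$ \emph{infinite} by \cite[Corollary 3.9(iii)]{pin10}. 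When no such configuration exists, the degenerate case is settled by \cite{lar18}: either $w$ receives no nontrivial path and $\langle t_w\rangle$ is a matricial ideal (impossible in a purely infinite ring), or aperiodicity yields an initial cycle with an entrance and $t_w$ is infinite by Lemma \ref{lem4.4}. Note the asymmetry your plan cannot reproduce: domination by a properly infinite subprojection yields only infiniteness, never proper infiniteness directly, which is precisely why the quotient-by-quotient criterion of \cite[Corollary 2.9]{pin10}, rather than a single application of Lemma \ref{lem4.1} inside $\kp$, is the engine of the proof; the counterexample above shows such a single application is not always available.
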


\begin{proof}
(1) $\Longrightarrow$ (3). Fix $v\in \La^0$. In view of \cite[Corollary 2.9]{pin10}, it suffices to show that the image of $s_v$ in any quotient of $\kp$ is either zero or infinite. Moreover, since $\La$ is strongly aperiodic, every ideal of $\kp$ is of the form of $I_H$ for some saturated hereditary subset $H$ of $\La^0$ \cite[Theorem 9.4]{cla14-2}. So, let $I_H$ be an ideal of $\kp$ such that $v\notin H$. Writing $\Gamma:=\La\setminus H\La$, then $\Gamma$ is an aperiodic $k$-graph and we have $\kp/I_H\cong \mathrm{KP}_K(\Gamma)$, which is purely infinite as well. In the following, for each $\gamma\in \Gamma$, we write $t_\gamma:=s_\gamma+I_H$ in $\mathrm{KP}_K(\Gamma)$. We divide our discussion into two cases:

Case I: There are two distinct vertices $v\neq u,w\in \Gamma^0$ such that $v\Gamma w, u\Gamma w\neq \emptyset$. Fix some $\mu_1\in v\Gamma w$ and $\mu_2\in u\Gamma w$. Then $q_1:=t_{\mu_1}t_{\mu_1^*}$ and $q_2:=t_{\mu_2}t_{\mu_2^*}$ are two orthogonal idempotents in $\mathrm{KP}_K(\Gamma)$ (because $s_v$ and $s_u$ are) such that
$$q_1\sim t_{\mu_1^*}t_{\mu_1}=t_w=t_{\mu_2^*}t_{\mu_2}\sim q_2.$$
Now, Lemma \ref{lem4.1} implies that $t_w$ is properly infinite. Since $t_v\geq q_1\sim t_{\mu^*_1} t_{\mu_1}=t_w$, apply \cite[Corollary 3.9(iii)]{pin10} to conclude that $t_v$ is infinite in $\mathrm{KP}_K(\Gamma)$.

Case II: In the other case, there exists $w\in \Gamma^0$ with $v\Gamma w\neq \emptyset$ such that $w\Gamma u=\emptyset$ for all $u\in \Gamma^0\setminus\{w\}$. If $w\Gamma w=\{w\}$, i.e. $w$ receives no nontrivial paths, then the ideal $I_w=\langle t_w\rangle$ in $\mathrm{KP}_K(\Gamma)$ is a matricial algebra (cf. \cite[Lemma 6.2]{lar18}), which is impossible. Otherwise, since $\Gamma$ is aperiodic, \cite[Lemma 6.1]{lar18} implies that $w$ is the base of an (initial) cycle with an entrance. Thus $t_w$ is infinite (see Lemma \ref{lem4.4}(1) below). Now if $\mu\in v\Gamma w$, then $t_v\geq t_\mu t_{\mu^*}\sim t_{\mu^*}t_\mu=t_w$, and hence $t_v$ would be infinite in each case. Since $I_H$ was arbitrary, this follows that $s_v$ is a properly infinite idempotent in $\kp$ as desired.

(3) $\Longrightarrow$ (2). To prove this part, we consider the groupoid $\g_\La$ associated to $\La$, and then apply Theorem \ref{thm3.4}. According to \cite[Proposition 2.8]{kum00}, the collection $\mathcal{B}=\{Z(\la): \la\in\La\}$ is a basis of compact and open sets for the topology on $\go$. Also, for each $\la\in\La$, we have
$$1_{Z(\la)}=s_\la s_{\la^*}\sim s_{\la^*} s_\la=s_{s(\la)},$$
and hence $1_{Z(\la)}$ is properly infinite by Lemma \ref{lem3.1}. Since $\g_\La$ is strongly effective \cite[Proposition 6.3]{cla17}, Theorem \ref{thm3.4} follows that $\kp=A_K(\g_\La)$ is properly purely infinite.

(2) $\Longrightarrow$ (1) is immediate by \cite[Lemma 3.4(i)]{pin10}.
\end{proof}

Now, analogous to \cite[Theorem 5.4]{lar18}, we may use the notion of generalized cycle to give a criterion under which the vertex idempotents $s_v$ are properly infinite. It is the $k$-graphic version of Corollary \ref{cor3.6}.

\begin{defn}[\cite{eva12}]
A \emph{generalized cycle in $\La$} is a pair $(\mu,\nu)$ of distinct paths $\mu,\nu\in \La$ with $s(\mu)=s(\nu)$ and $r(\mu)=r(\nu)$ such that $\mathrm{MCE}(\mu\tau,\nu)\neq \emptyset$ for all $\tau \in s(\mu)\La$; or equivalently $Z(\mu)\subseteq Z(\nu)$. Note that if $\mu$ is a cycle in $\La\setminus\La^0$ (in the sense $s(\mu)=r(\mu)$), then $(\mu^i,\mu^j)$ is a generalized cycle for every $j\leq i\in \N$. We say that $\tau\in s(\nu)\La$ is an \emph{entrance} for $(\mu,\nu)$ if $\mathrm{MCE}(\mu,\nu\tau)=\emptyset$ (i.e.,  $\nu\tau\La^{\leq \infty}\subseteq Z(\nu)\setminus Z(\mu)$).
\end{defn}

Following \cite{lar18}, if there is a path from $s(\mu)=s(\nu)$ to $v\in \La^0$ we say that \emph{$(\mu,\nu)$ connects to $v$} or \emph{$v$ is reached from $(\mu,\nu)$}.

\begin{lem}\label{lem4.4}
Let $(\mu,\nu)$ be a generalized cycle in $\La$ with an entrance. Then
\begin{enumerate}
  \item $s_{s(\mu)}=s_{s(\nu)}$ is an infinite idempotent in $\kp$.
  \item If $v\in \La^0$ is reached from $(\mu,\nu)$, then $s_v$ is infinite in $\kp$.
\end{enumerate}
\end{lem}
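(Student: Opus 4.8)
The plan is to prove both statements purely through identities among the generators $s_\la,s_{\la^*}$, using only the relations (KP1)--(KP4); crucially, I cannot invoke pure infiniteness here, since Lemma~\ref{lem4.4} is itself one of the ingredients feeding into Theorem~\ref{thm4.2}. Write $p:=s_\nu s_{\nu^*}$ and $q:=s_\mu s_{\mu^*}$, which are $1_{Z(\nu)}$ and $1_{Z(\mu)}$ under $\kp\cong A_K(\g_\La)$. The defining condition $Z(\mu)\subseteq Z(\nu)$ of a generalized cycle gives $qp=pq=q$, that is $q\le p$, and the existence of an entrance $\tau$ upgrades this to a \emph{strict} inequality $q<p$: indeed $Z(\nu\tau)$ is a nonempty (every vertex of a row-finite locally convex $k$-graph emits a boundary path) subset of $Z(\nu)\setminus Z(\mu)$, so $Z(\mu)\subsetneq Z(\nu)$ and hence $q\neq p$.

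For part (1), I would show that $s_{s(\mu)}$ is equivalent to a proper subidempotent of itself, which by the characterization of infinite idempotents recalled in Section~3 makes it infinite. Set $e:=s_{\mu^*}s_\nu$ and $f:=s_{\nu^*}s_\mu$ (legitimate because $r(\mu)=r(\nu)$ and $s(\mu)=s(\nu)$). From $q\le p$ one gets $p\,s_\mu=s_\mu$, whence
\begin{align*}
ef&=s_{\mu^*}(s_\nu s_{\nu^*})s_\mu=s_{\mu^*}\,p\,s_\mu=s_{\mu^*}s_\mu=s_{s(\mu)},\\
fe&=s_{\nu^*}(s_\mu s_{\mu^*})s_\nu=s_{\nu^*}\,q\,s_\nu=:g,
\end{align*}
so $s_{s(\mu)}\sim g$, and a direct check shows $g$ is an idempotent with $g\le s_{s(\mu)}$. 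The one real obstacle is the \emph{properness} $g\neq s_{s(\mu)}$, and this is exactly where the entrance is used: conjugating the hypothetical equality $g=s_{s(\mu)}$ by $s_\nu$ and $s_{\nu^*}$ gives $s_\nu g s_{\nu^*}=p$ on one side and $s_\nu g s_{\nu^*}=pqp=q$ on the other (using $pq=qp=q$), forcing $q=p$ and contradicting strictness. Thus $s_{s(\mu)}\sim g<s_{s(\mu)}$, so $s_{s(\mu)}$ is infinite; if one prefers to match the definition literally, the same data yield $s_{s(\mu)}\oplus(s_{s(\mu)}-g)\sim g\oplus(s_{s(\mu)}-g)\precsim s_{s(\mu)}$.

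For part (2), let $\lambda$ be a path realizing the connection, oriented so that $s(\lambda)=s(\mu)=s(\nu)$ and $r(\lambda)=v$; this is the orientation actually used in Case~II of the proof of Theorem~\ref{thm4.2}, where $\mu\in v\Gamma w$ produces $t_v\ge t_\mu t_{\mu^*}\sim t_w$. Then $s_\lambda s_{\lambda^*}\le s_{r(\lambda)}=s_v$ while $s_\lambda s_{\lambda^*}\sim s_{\lambda^*}s_\lambda=s_{s(\lambda)}=s_{s(\mu)}$, which is infinite by part~(1). Since an idempotent dominating (in the order $\le$) one that is equivalent to an infinite idempotent is again infinite, I conclude that $s_v$ is infinite, by \cite[Corollary~3.9(iii)]{pin10} exactly as in Case~II.

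The heart of the argument, and the step I expect to require the most care, is promoting the equivalence $q\sim p$—which holds for \emph{every} generalized cycle and by itself says nothing about infiniteness—to the strict relation $g<s_{s(\mu)}$ using the entrance; the short conjugation $pqp=q$ is precisely what converts ``$Z(\mu)\subsetneq Z(\nu)$'' into ``$s_{s(\mu)}$ strictly dominates an equivalent copy of itself.'' An alternative route, more in the spirit of Corollary~\ref{cor3.6}, is to feed the compact open bisection $B=Z(\nu*_s\mu)$, which satisfies $s(B)=Z(\mu)\subsetneq Z(\nu)=r(B)$, into the computation preceding Corollary~\ref{cor3.6} to see that $1_{r(B)}=p$ is infinite, and then transport infiniteness along $p\sim s_{s(\nu)}=s_{s(\mu)}$ via the infinite analogue of Lemma~\ref{lem3.1}.
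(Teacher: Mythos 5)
Your proof is correct and follows essentially the same route as the paper: both arguments hinge on the strict inequality $s_\mu s_{\mu^*}<s_\nu s_{\nu^*}$ forced by the entrance (which the paper imports from \cite[Lemma 4.3]{lar18} and you re-derive from $Z(\mu)\subsetneq Z(\nu)$), and both then exhibit an idempotent as equivalent to a proper subidempotent of itself, with part (2) handled identically via $s_v\geq s_\gamma s_{\gamma^*}\sim s_{s(\mu)}$. The only cosmetic difference is that you conjugate by $s_\nu$, $s_{\nu^*}$ to place the strict subidempotent $g=s_{\nu^*}s_\mu s_{\mu^*}s_\nu$ directly under $s_{s(\mu)}$, whereas the paper first concludes that $s_\nu s_{\nu^*}$ is infinite and then transports infiniteness along $\sim$ to $s_{s(\mu)}$.
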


\begin{proof}
\cite[Lemma 4.3]{lar18} says that $s_\mu s_{\mu^*}<s_\nu s_{\nu^*}$. Thus by
$$s_\nu s_{\nu^*}\sim s_{\nu^*}s_\nu =s_{s(\mu)}=s_{\mu^*}s_\mu \sim s_\mu s_{\mu^*},$$
$s_\nu s_{\nu^*}$ is an infinite idempotent, and so is $s_{\nu^*}s_\nu=s_{s(\nu)}=s_{s(\mu)}$ as well.

Statement (2) follows from (1). Indeed, if $\gamma\in v\La s(\nu)$ then $s_v\geq s_\gamma s_{\gamma^*}\sim s_{\gamma^*} s_{\gamma}=s_{s(\mu)}$, and hence $s_\gamma s_{\gamma^*}$ and $s_v$ are infinite by part (1).
\end{proof}

\begin{cor}
Let $\La$ be a strongly aperiodic, locally convex and row-finite $k$-graph and let $K$ be a field. Suppose that for each saturated hereditary $H\subseteq \La^0$, every vertex $v\in \La^0\setminus H=(\La\setminus H\La)^0$ is reached from a generalized cycle with an entrance in the $k$-graph $\La\setminus H\La$. Then $\kp$ is properly purely infinite.
\end{cor}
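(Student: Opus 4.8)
The plan is to deduce this corollary directly from Theorem \ref{thm4.2} together with Lemma \ref{lem4.4}, rather than going through the groupoid picture of Corollary \ref{cor3.6}. By the equivalence (2) $\Leftrightarrow$ (3) in Theorem \ref{thm4.2}, it suffices to verify that every vertex idempotent $s_v$ is properly infinite in $\kp$. So I would fix an arbitrary $v\in\La^0$ and aim to establish proper infiniteness of $s_v$.

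To do this I would invoke \cite[Corollary 2.9]{pin10}, exactly as in the proof of (1) $\Rightarrow$ (3) of Theorem \ref{thm4.2}: it is enough to check that the image of $s_v$ in every quotient of $\kp$ is either zero or infinite. Since $\La$ is strongly aperiodic, \cite[Theorem 9.4]{cla14-2} guarantees that every ideal has the form $I_H$ for some saturated hereditary $H\subseteq\La^0$, and \cite[Proposition 5.5]{pin13} gives $\kp/I_H\cong\mathrm{KP}_K(\La\setminus H\La)$, under which $s_v$ maps to the corresponding vertex idempotent $t_v:=s_v+I_H$. This image is zero precisely when $v\in H$, so the case $v\in H$ is immediate.

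It then remains to treat the case $v\notin H$, that is, $v\in(\La\setminus H\La)^0$. Here the quotient $k$-graph $\Gamma:=\La\setminus H\La$ is again locally convex and row-finite by \cite[Theorem 5.2(b)]{rae03}, and it is aperiodic by strong aperiodicity of $\La$. The hypothesis asserts that $v$ is reached from a generalized cycle $(\mu,\nu)$ with an entrance inside $\Gamma$, so applying Lemma \ref{lem4.4}(2) to the $k$-graph $\Gamma$ yields that $t_v$ is infinite in $\mathrm{KP}_K(\Gamma)=\kp/I_H$. Combining the two cases, the image of $s_v$ in each quotient is zero or infinite, whence \cite[Corollary 2.9]{pin10} gives that $s_v$ is properly infinite; as $v$ was arbitrary, (3) $\Rightarrow$ (2) in Theorem \ref{thm4.2} completes the proof.

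I do not expect a serious analytic obstacle here, since the argument is essentially careful bookkeeping of quotients. The one point that genuinely requires care is the compatibility between the ambient hypothesis and the quotient $k$-graphs: one must read the phrase ``reached from a generalized cycle with an entrance'' inside $\Gamma=\La\setminus H\La$ rather than inside $\La$ (which is precisely how the hypothesis is stated), and one must ensure that Lemma \ref{lem4.4} and the structural fact it rests on (notably \cite[Lemma 4.3]{lar18}) remain valid when the ambient $k$-graph is replaced by $\Gamma$. Because $\Gamma$ inherits local convexity, row-finiteness, and aperiodicity, this transfer is routine, and the corollary follows.
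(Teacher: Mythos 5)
Your argument is correct and is essentially the paper's own proof: fix $v$, reduce via \cite[Corollary 2.9]{pin10} to showing the image of $s_v$ in each quotient $\kp/I_H\cong\mathrm{KP}_K(\La\setminus H\La)$ is zero or infinite, apply Lemma \ref{lem4.4}(2) in the quotient $k$-graph when $v\notin H$, and conclude with Theorem \ref{thm4.2}. You merely make explicit the appeal to \cite[Corollary 2.9]{pin10} and the inheritance of local convexity, row-finiteness and aperiodicity by $\La\setminus H\La$, which the paper leaves implicit.
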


\begin{proof}
Fix $v\in \La^0$. For any ideal $I_H\trianglelefteq\kp$ with $v\notin I_H$, $v$ is reached from a generalized cycle with an entrance in $\La\setminus H\La$, so $s_v+I_H$ is an infinite idempotent in $\kp/I_H\cong \mathrm{KP}(\La\setminus H\La)$ by Lemma \ref{lem4.4}(2). Therefore, $s_v$ is properly infinite in $\kp$, and Theorem \ref{thm4.2} concludes the result.
\end{proof}

In the case that there are only finitely many vertices connecting to each vertex $v\in \La^0$, we can establish simple graph-theoretic criteria for the pure infiniteness of Kumjian-Pask algebras $\kp$. Note that this case covers all unital Kumjian-Pask algebras.

\begin{prop}\label{prop4.6}
Let $\La$ be a locally convex row-finite $k$-graph and $K$ a field. Suppose that $\La$ is strongly aperiodic, and that $\La^0_{\geq v}:=\{w:v\La w\neq \emptyset\}$ is finite for every $v\in \La^0$. Then the following are equivalent. \begin{enumerate}
  \item $v\La\neq \{v\}$ for every vertex $v\in \La^0$.
  \item Every vertex of $\La$ is reached from a cycle.
  \item $\kp$ is properly purely infinite.
  \item $\kp$ is purely infinite.
\end{enumerate}
\end{prop}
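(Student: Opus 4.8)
The plan is to establish $(1)\Leftrightarrow(2)$ and then close the loop $(1)\Rightarrow(3)\Rightarrow(4)\Rightarrow(1)$, feeding the hard direction into Theorem~\ref{thm4.2} and the corollary preceding this proposition; the finiteness hypothesis enters only through a pigeonhole argument. For $(1)\Rightarrow(2)$, fix $v$ and build a backward chain of edges: since $w\La\neq\{w\}$ forces every vertex $w$ to receive an edge (take the initial length-one factor of a nontrivial path with range $w$), I obtain edges $f_1,f_2,\dots$ with $r(f_1)=v$ and, writing $w_i:=s(f_i)$, with $r(f_{i+1})=w_i$. Each $w_i$ reaches $v$, hence lies in the finite set $\La^0_{\geq v}$, so $w_i=w_j$ for some $i<j$; then $f_{i+1}\cdots f_j$ is a cycle based at $w_i$ while $f_1\cdots f_i$ is a path from $w_i$ to $v$, so $v$ is reached from a cycle. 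The converse $(2)\Rightarrow(1)$ needs no finiteness: if $\gamma$ runs from the base $u$ of a cycle $\alpha$ to $v$, then $\gamma\alpha\in v\La$ is nontrivial.

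The heart of the argument is $(1)\Rightarrow(3)$, which I would reduce to verifying the hypothesis of the preceding corollary, namely that for \emph{every} saturated hereditary $H\subseteq\La^0$ each vertex of $\Gamma:=\La\setminus\La H$ is reached from a generalized cycle with an entrance in $\Gamma$. The first ingredient is that $(1)$ \emph{descends} to each such $\Gamma$. Suppose some $v\in\Gamma^0=\La^0\setminus H$ received no nontrivial path in $\Gamma$; then every nontrivial $\lambda\in\La$ with $r(\lambda)=v$ would have $s(\lambda)\in H$. Because $(1)$ gives an edge into $v$, the trivial path $v$ is excluded from $v\La^{\leq\mathbf 1}$ (with $\mathbf 1=(1,\dots,1)$), so every element of $v\La^{\leq\mathbf 1}$ is nontrivial with source in $H$; hence $\{s(\lambda):\lambda\in v\La^{\leq\mathbf 1}\}\subseteq H$ and saturation of $H$ forces $v\in H$, a contradiction. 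Thus $\Gamma$ again has no sources.

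The second ingredient uses finiteness. The set $\Gamma^0_{\geq v}$ of vertices of $\Gamma$ reaching $v$ is contained in $\La^0_{\geq v}$, hence finite, and it is closed under predecessors; by the descent just proved, each of its vertices receives an edge from within it. In this finite directed graph with all in-degrees positive, the condensation has a source strongly connected component $C$; $C$ is nontrivial (so contains a cycle), and being a source no vertex of $\Gamma$ outside $C$ reaches $C$. Fixing $w\in C$ exhibits $w$ as the base of an \emph{initial} cycle reaching $v$, exactly the configuration of Case~II in the proof of Theorem~\ref{thm4.2}. Since $\La$ is strongly aperiodic, $\Gamma$ is aperiodic, so \cite[Lemma~6.1]{lar18} upgrades this to an initial cycle \emph{with an entrance}. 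Therefore $v$ is reached from a generalized cycle with an entrance, and the preceding corollary yields $(3)$.

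Finally, $(3)\Rightarrow(4)$ is \cite[Lemma~3.4(i)]{pin10}, as $\kp$ has local units and so is s-unital. For $(4)\Rightarrow(1)$ I argue by contraposition: if $v\La=\{v\}$ for some $v$, then $r(\lambda)=v$ forces $\lambda=v$, so the corner $s_v\kp s_v=Ks_v\cong K$; a short computation in this corner shows $s_v\oplus s_v\not\precsim s_v$, so $s_v$ is not properly infinite and Theorem~\ref{thm4.2} shows $\kp$ is not purely infinite. I expect the only genuine difficulty to lie in $(1)\Rightarrow(3)$, and within it the passage from a cycle to a generalized cycle \emph{with an entrance}: for a $k$-graph an arbitrary cycle in an aperiodic graph need not visibly carry an entrance, so one must first use finiteness to extract an \emph{initial} cycle via the source component above, after which aperiodicity, through \cite[Lemma~6.1]{lar18}, supplies the entrance.
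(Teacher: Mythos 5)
Your proposal is correct and follows the same skeleton as the paper's proof: finiteness of $\La^0_{\geq v}$ plus a pigeonhole argument produces a cycle; condition (1) is shown to descend to every quotient graph $\Gamma=\La\setminus\La H$ via saturation of $H$; \cite[Lemma 6.1]{lar18} is invoked to pass from a cycle to an (initial) generalized cycle with an entrance under aperiodicity; and the conclusion comes from Lemma \ref{lem4.4} together with Theorem \ref{thm4.2} (your route through the corollary preceding the proposition is the same thing, since that corollary is itself proved exactly this way). Within this common skeleton there are three genuine local differences. First, the paper applies \cite[Lemma 6.1]{lar18} directly to ``$v$ is reached from a cycle in $\Gamma$,'' with no intermediate step; your condensation/source-component detour is a hedge against that lemma requiring a stronger hypothesis, and it is harmless, but be aware that your configuration is \emph{not} ``exactly the configuration of Case II'' of Theorem \ref{thm4.2}: Case II demands $w\Gamma u=\emptyset$ for \emph{all} $u\neq w$, whereas your $w\in C$ may receive paths from other vertices of $C$. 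If ``initial cycle'' in \cite{lar18} means a cycle whose base is reached only from vertices lying on the cycle, you should additionally choose a closed path passing through every vertex of $C$ (possible since $C$ is finite and strongly connected); as written, this step is slightly overstated, though no worse than the paper's own direct citation. Second, your $(4)\Rightarrow(1)$ differs from the paper's: the paper notes that $v\La=\{v\}$ makes the ideal $\langle s_v\rangle$ a matricial algebra (citing \cite[Lemma 6.2]{lar18}) and that pure infiniteness passes to ideals, whereas you observe that the corner $s_v\kp s_v\cong K$, so a rank count in $M_2(K)$ shows $s_v$ is not properly infinite, contradicting Theorem \ref{thm4.2}(3); your version is self-contained and avoids the extra citation. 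Third, your descent argument is more careful than the paper's: by working with $n=\mathbf{1}$ and using the edge into $v$ supplied by (1) to exclude the trivial path from $v\La^{\leq\mathbf{1}}$, you close a small gap in the paper's claim (which, as stated, does not rule out the trivial path, and moreover writes $r(\la)$ where $s(\la)$ is meant). Your $(1)\Rightarrow(2)$ via a backward chain of edges is just a repackaging of the paper's long-path pigeonhole and is equally valid.
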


\begin{proof}
(1) $\Longrightarrow$ (2). Fix $v\in \La^0$ and assume $|\La^0_{\geq v}|=t_0$. Using statement (1), for each $t\geq t_0^k$ in $\mathbb{N}$, there exists $\la\in v\La$ such that $|\la|:=d(\la)_1+\cdots+ d(\la)_k=t$, where $d(\la)=(d(\la)_1,\ldots,d(\la)_k)$. This forces $\la(m)=\la(n)$ for some $m<n\in \mathbb{N}^k$, and thus the cycle $\la(m,n)$ connects to $v$.

(2) $\Longrightarrow$ (3). Fix some $v\in \La^0$ and an ideal $I_H$ of $\kp$ such that $s_v\notin I_H$. Then $v\notin H$. For convenience, we write $\Gamma:=\La\setminus \La H$, which is a strongly aperiodic $k$-graph too. We first claim $v\Gamma\neq \{v\}$. Indeed, if $v\Gamma=\{v\}$ then for each $n\in \N^k\setminus\{0\}$ we have $\{r(\la):\la\in v\La^{\leq n}\}\subseteq H$. Thus $v\in H$ by the saturated property, a contradiction. Now implication (1) $\Longrightarrow$ (2) above yields that $v$ is reached from a cycle in $\Gamma$, and using \cite[Lemma 6.1]{lar18}, is also from an (initial) cycle with an entrance. Lemma \ref{lem4.4} says that $s_v+I_H$ is an infinite idempotent in $\mathrm{KP}_K(\Gamma)=\kp/I_H$. Therefore, since $I_H$ was arbitrary, $s_v$ is properly infinite in $\kp$, and consequently $\kp$ is properly purely infinite by Theorem \ref{thm4.2}.

(3) $\Longleftrightarrow$ (4) are obtained from Theorem \ref{thm4.2}.

(3) and (4) $\Longrightarrow$ (1). If $v\La=\{v\}$ for some $v\in\La^0$, then the ideal $I_v=\langle s_v \rangle$ in $\kp$ is isomorphic to the matrix algebra $M_{|v\La|}(K)$ (cf. \cite[Lemma 6.2]{lar18}). But this is impossible because the (proper) pure infiniteness passes to ideals.

\end{proof}
%%%%%%%%%%%%%%%%%%%%%%%%%%%%%%%%%%%%%%%%%%%%%%%%

\section{Complex Steinberg aplgebras and groupoid $C^\ast$-algebras}

In the last section, we investigate the relationship between the purely infinite property of Steinberg algebras and groupoid $C^*$-algebras. More precisely, since the complex Steinberg algebra $A_\C(\g)$ may be regarded as a dense $*$-subalgebra of $C^*_r(\g)$ (see \cite[Proposition 4.2]{cla14}), we focus on the conjecture \cite[Problem 8.4]{pin10} for groupoid algebras: If $A_\C(\g)$ is properly purely infinite, is this property inherited by $C^*_r(\g)$? It is shown in \cite[Section 4]{bro19} that this conjecture is true for simple groupoid algebras (when $\g$ is minimal and effective). However, in the nonsimple cases, the properly purely infinite property (in both algebraic and $C^*$-algebraic senses) is closely related to ideal structure and quotients. Here, we work with amenable groupoids in the sense of \cite{ana00} to apply \cite[Proposition 6.1.8]{ana00} and the ideal description of \cite[Corollary 5.9]{bro14}. Moreover, since the groupoid $\g_\La$ associated to each $k$-graph $\La$ is always amenable, we deduce also an interesting result for $k$-graph algebras.

We recall the definition of reduced $C^*$-algebra $C^*_r(\g)$ from \cite{ren80}. Let $\g$ be an ample groupoid. Write $C_c(\g)$ for the complex vector space consisting of compactly supported continuous functions on $\g$, which is an $*$-algebra with the convolution multiplication and the involution $f^*(\alpha):=\overline{f(\alpha^{-1})}$. Then $C^*_r(\g)$ is the completion of $C_c(\g)$ for the left regular representations on $\ell^2(\g)$. Indeed, for each $u\in \go$ and $\g_u:=s^{-1}(\{u\})$, if $\pi_u:C_c(\g)\rightarrow B(\ell^2(\g_u))$ is the left regular $*$-representation, defined by
$$\pi_u(f)\delta_\alpha:=\sum_{s(\beta)=r(\alpha)}f(\beta) \delta_{\beta \alpha} \hspace{5mm} (f\in C_c(\g),~\alpha\in \g_u),$$
then the {\it reduced $C^*$-algebra $C^*_r(\g)$} is the completion of $C_c(\g)$ under the reduced $C^*$-norm
$$\|f\|_r:=\sup_{u\in \go}\|\pi_u(f)\|.$$
Moreover, there is a full $C^*$-algebra $C^*(\g)$ associated to $\g$, which is the completion of $C_c(\g)$ taken over all $\|.\|_{C_c(\g)}$-decreasing representations of $\g$. So, $C^*_r(\g)$ is a quotient of $C^*(\g)$, and \cite[Proposition 6.1.8]{ana00} follows that they are equal if the underlying groupoid $\g$ is amenable.

Let $A$ be a $C^*$-algebra. For positive matrices $a\in M_m(A)^+$ and $b\in M_n(A)^+$ over $A$, we write $a\precsim_{C^*} b$ if there exists a sequence $\{x_n\}$ in $M_{n,m}(A)$ such that $x_n^* b x_n\rightarrow a$. As before, for $a,b\in A$ we denote $a\oplus b:=diag(a,b)$ in $M_{2}(A)$. A nonzero positive element $a\in A$ is called {\it $C^*$-infinite} if $a\oplus b\precsim_{C^*} a$ for some positive $0\neq b\in A$, and is called {\it $C^*$-properly infinite} if $a\oplus a\precsim_{C^*} a$. Following \cite{kirch00}, we say that $A$ is {\it $C^*$-purely infinite} in case every nonzero positive element of $A$ is properly infinite.

\begin{defn}[{See \cite[Chaper 2]{ren80}}]
A groupoid $\g$ is called \emph{topologically principal} in case the set $\{u\in \go:\{\alpha\in \g:r(\alpha)=s(\alpha)\}=\{u\}\}$ is dense in $\go$. We say that $\g$ is \emph{essentially principal} if for every nonempty closed invariant subset $D$ of $\go$, the groupoid $\g_D$ is topologically principal.
\end{defn}

Note that, when $\g$ is second countable, \cite[Proposition 3.3]{ren08} follows that $\g$ is strongly effective if and only if it is essentially principal.

The following is the $C^*$-analogue of Theorem \ref{thm3.4} (see also \cite[Theorem 4.1]{bon17}).

\begin{prop}\label{prop5.2}
Let $\g$ be a second countable Hausdorff ample groupoid and let $\mathcal{B}$ be a basis for $\go$ containing compact open sets. Suppose that $\g$ is amenable (in the sense of \cite{ana00}) and essentially principal. Then $C^*(\g)=C^\ast_r(\g)$ is $C^\ast$-purely infinite if and only if $1_V$ is $C^\ast$-properly infinite for every $V\in \mathcal{B}$.
\end{prop}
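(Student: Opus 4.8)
The plan is to prove Proposition~\ref{prop5.2} by closely mirroring the structure of the algebraic Theorem~\ref{thm3.4}, but working in the $C^*$-world with the Cuntz comparison $\precsim_{C^*}$ and positive elements. The forward direction (pure infiniteness $\Rightarrow$ each $1_V$ properly infinite) is immediate, since each $1_V$ is a nonzero positive element of $C^*_r(\g)$. The substance is the converse, and the key philosophy is to exploit the ideal–quotient description: because $\g$ is amenable and second countable (hence $C^*(\g)=C^*_r(\g)$ by \cite[Proposition~6.1.8]{ana00}), and because $\g$ is essentially principal, the lattice of closed two-sided ideals of $C^*_r(\g)$ is parametrized by open invariant subsets $U\subseteq\go$ via \cite[Corollary~5.9]{bro14}, with quotients $C^*_r(\g)/I_U\cong C^*_r(\g_D)$ for $D:=\go\setminus U$. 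The aim is to reduce the global $C^*$-pure infiniteness to a statement about infiniteness of the characteristic idempotents $1_V$ inside every such quotient.

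First I would establish the analogue of the implication (2)$\Rightarrow$(3) in Theorem~\ref{thm3.4}: given an open invariant $U$ with $D=\go\setminus U$ and a relatively compact open $\widetilde V\subseteq D$, lift $\widetilde V$ to a compact open $V\subseteq\go$ exactly as in the proof of Theorem~\ref{thm3.4} (using that $\mathcal B$ is a basis and $\widetilde V$ is compact), so that $1_{\widetilde V}$ is the image of $1_V$ under the quotient map $C^*_r(\g)\to C^*_r(\g_D)$. By hypothesis $1_V$ is $C^*$-properly infinite; since $C^*$-proper infiniteness of a positive element passes to quotients (it is witnessed by an approximation $x_n^*(1_V\oplus 1_V)x_n\to 1_V$, and applying the quotient $*$-homomorphism preserves this relation), the image $1_{\widetilde V}$ is $C^*$-properly infinite, hence $C^*$-infinite, in $C^*_r(\g_D)$. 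This gives a $C^*$-version of condition~(3).

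Next I would run the $C^*$-analogue of (3)$\Rightarrow$(4)$\Rightarrow$(1). The bridge to general positive elements is the theory of \cite{kirch00}: to conclude that $C^*_r(\g)$ is $C^*$-purely infinite, it suffices to show that every nonzero positive element $a$ is properly infinite, and by the ideal characterization it is enough to know that in every quotient every nonzero hereditary subalgebra (equivalently, every nonzero positive element up to Cuntz equivalence) dominates a $C^*$-infinite projection of the form $1_{\widetilde V}$. The essential-principality hypothesis is exactly what lets one run the $C^*$-version of Lemma~\ref{lem3.3}: in each quotient $C^*_r(\g_D)$ (with $\g_D$ topologically principal), any nonzero positive element is Cuntz-below, and dominates up to equivalence, a characteristic function $1_{\widetilde V}$ of a compact open subset of $D$, by the standard groupoid argument that the diagonal $C_0(\go)$ detects positivity via the conditional expectation. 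Combining this with the previous step—that such $1_{\widetilde V}$ is $C^*$-infinite—and invoking the Kirchberg–Rørdam criterion \cite[cf.~\cite{kirch00}]{} that pure infiniteness is equivalent to every nonzero positive element being properly infinite, I would conclude that $C^*_r(\g)$ is $C^*$-purely infinite.

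\emph{The main obstacle} I anticipate is the $C^*$-analogue of Lemma~\ref{lem3.3}, i.e.\ showing that in a topologically principal groupoid $C^*$-algebra every nonzero positive element Cuntz-dominates a characteristic function $1_V$ of a compact open set in the unit space. In the algebraic Steinberg setting this rests on the ``disjointification'' normal form \cite[Lemma~2.2]{cla15} and the effectiveness argument producing an idempotent; in the $C^*$-setting one must instead use the faithful conditional expectation $E:C^*_r(\g)\to C_0(\go)$ together with a Urysohn/cutdown argument to extract, from the positive part $E(a)$ supported on $\go$, a genuine compact open $V$ on which $a$ is bounded below, and then verify the Cuntz subequivalence $1_V\precsim_{C^*}a$. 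Controlling the off-diagonal terms of $a$ under the left-regular representation—so that the cutdown $1_V a 1_V$ is close to $E(a)1_V$—is the delicate analytic point, and is where essential principality (density of units with trivial isotropy) must be used to guarantee the off-diagonal contributions can be made negligible. Once this Cuntz-comparison lemma is in hand, the remaining implications are formal consequences of the ideal structure and the standard $C^*$-theory of proper infiniteness, paralleling the algebraic proof step for step.
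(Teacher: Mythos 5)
Your proposal is correct and follows essentially the same route as the paper: reduce via the ideal correspondence of \cite[Corollary 5.9]{bro14} to showing every nonzero hereditary subalgebra of each quotient $C^*(\g_D)$ contains an infinite projection (the criterion of \cite[Proposition 4.7]{kirch00}), then produce a projection equivalent to some $1_{\widetilde V}$ whose infiniteness is inherited from the hypothesis on $1_V$ under the quotient map. The ``main obstacle'' you identify---that a nonzero positive element Cuntz-dominates a nonzero element of $C_0(D)$, hence some $1_{\widetilde V}$---is exactly what the paper imports as \cite[Lemma 3.2]{bro15}, so no new argument is needed there.
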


\begin{rem}
If $\g$ is second countable, amenable and essentially principal (strongly effective), then \cite[Corollary 5.9]{bro14} implies that $C_0(\go)$ separates closed ideals of $C^*(\g)$ in the sense that for every closed ideals $I\nsubseteq J$ of $C^*(\g)$, there exists $f\in C_0(\go)$ such that $f\in I\setminus J$. Thus, in this case, \cite[Proposition 2.14]{pas07} implies that $C^*$-purely infinite, $C^*$-weakly purely infinite, and $C^*$-strongly purely infinite $C^*$-algebras coincide.
\end{rem}

\begin{proof}[Proof of Proposition \ref{prop5.2}.]
Since the ``only if" implication is immediate, we prove the ``if" part only. Suppose that all $1_V$, for $V\in \mathcal{B}$, are $C^\ast$-properly infinite in $C^*(\g)$. Using \cite[Proposition4.7]{kirch00}, it suffices to to show that every nonzero hereditary $C^*$-subalgebra in any quotient
of $C^*(\g)$ contains an infinite projection. For this, let $I$ be a closed ideal of $C^*(\g)$ and let $A$ be a nonzero hereditary $C^*$-subalgebra of $C^*(\g)/I$. By \cite[Corollary 5.9]{bro14}, there is an open invariant $U\subseteq \go$ such that $I=I_U$, and whence $C^*(\g)/I\cong C^*(\g_{D})$ where $D:=\go\setminus U$. So we may assume $A\subseteq C^*(\g_D)$.

Let $0\neq a\in A$ be a positive element. Then \cite[Lemmma 3.2]{bro15} says that there exists a nonzero positive element $h\in C_0(D)$ such that $h\precsim_{C^*} a$ in $C^*(\g_D)$. Now since $\{V\cap D:V\in \mathcal{B}\}$ is a basis for the induced topology on $\go_D$, there is $V\in \mathcal{B}$ such that $V\cap D\neq \emptyset$ and $h(x)>0$ for all $x\in \widetilde{V}:=V\cap D$. Consider $g\in C_0(D)$ defined by
$$g(x)=\left\{
         \begin{array}{ll}
           \frac{1}{\sqrt{h(x)}} & x\in \widetilde{V} \\
           0 & x\in D\setminus \widetilde{V}.
         \end{array}
       \right.
$$
Then we have $ghg=1_{\widetilde{V}}$, and thus $1_{\widetilde{V}}\precsim h\precsim a$ in $C^*(\g_D)$. By the argument after \cite[Proposition 2.6]{kirch00}, we may find $y\in C^*(\g_D)$ such that $1_{\widetilde{V}}=y^* a y^*$. If we set $z:=a^{1/2} y$, then $z^* z=y^* a y=1_{\widetilde{V}}$ and $zz^*=a^{1/2}yy^* a^{1/2}\in A$, giving $zz^*\sim_{C^*}1_{\widetilde{V}}$. Note  that $1_{\widetilde{V}}$ is (properly) infinite because it is the image of $1_V$ in the quotient $C^*(\g_D)\cong C^*(\g)/I$. Therefore, $zz^*$ is an infinite projection in $A$, as desired.
\end{proof}

We now prove the main result of this section, which is the nonsimple generalization of \cite[Theorem 4.7 and Corollary 4.8]{bro19} with an easier way.

\begin{prop}\label{prop5.4}
Let $\g$ be a second countable Hausdorff ample groupoid. Suppose that $\g$ is amenable and strongly effective. If $A_{\mathbb{C}}(\g)$ is properly purely infinite (in the sense of Definition \ref{defn3.2}), then $C^*(\g)=C^*_r(\g)$ is $C^*$-purely infinite.
\end{prop}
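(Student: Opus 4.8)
The goal is to deduce that $C^*(\g)$ is $C^*$-purely infinite from the proper pure infiniteness of $A_\C(\g)$. My strategy is to use Proposition \ref{prop5.2}, which reduces the problem to a single verification: I need only check that for every $V$ in some basis $\mathcal{B}$ of compact open sets for $\go$, the projection $1_V$ is $C^*$-properly infinite in $C^*(\g)$. So the entire proof is a bridge-building exercise: I must transfer the algebraic proper infiniteness of the idempotent $1_V$ (guaranteed by the hypothesis) into its $C^*$-algebraic proper infiniteness. The key structural point is that both notions concern the \emph{same} element $1_V$, living in the dense subalgebra $A_\C(\g)\subseteq C^*(\g)$.

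First I would fix a compact open $V\subseteq \go$ and invoke the hypothesis: since $A_\C(\g)$ is properly purely infinite, the idempotent $p:=1_V$ satisfies $p\oplus p\precsim p$ in the algebraic sense, i.e.\ there exist matrices $x\in M_{2,1}(A_\C(\g))$ and $y\in M_{1,2}(A_\C(\g))$ with $p\oplus p=x\,p\,y$. The heart of the argument is then to show that this \emph{algebraic} relation forces the \emph{$C^*$-algebraic} comparison $p\oplus p\precsim_{C^*} p$. For this I would appeal to the general principle that for a \emph{projection} $p$ in a $C^*$-algebra, the algebraic Murray--von Neumann-type comparisons coincide with the Cuntz comparisons: a projection is algebraically properly infinite (in the ring $M_\infty$ sense) if and only if it is $C^*$-properly infinite. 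Concretely, one uses that $p$ is a self-adjoint idempotent, so the witnesses $x,y$ can be replaced by their adjoint-symmetrized versions, and the factorization $p\oplus p = xpy$ can be upgraded to the Cuntz relation $(p\oplus p)\precsim_{C^*}(p\oplus 0)=p$ by passing to $z^*(p\oplus 0)z$ for a suitable $z$ built from $x,y$. The cleanest route is to recall from \cite{pin10} that for projections in a $C^*$-algebra the ring-theoretic proper infiniteness and the Kirchberg--R\o rdam $C^*$-proper infiniteness agree, which is precisely the content underlying \cite[Proposition 3.17]{pin10}.

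With $1_V$ shown to be $C^*$-properly infinite for each $V\in\mathcal{B}$, the hypotheses of Proposition \ref{prop5.2} are met: $\g$ is second countable, Hausdorff, ample, amenable, and strongly effective (hence essentially principal by \cite[Proposition 3.3]{ren08}), so $C^*(\g)=C^*_r(\g)$, and the conclusion that $C^*(\g)$ is $C^*$-purely infinite follows immediately. Thus the proof is essentially a two-line deduction once the projection-level transfer is in hand.

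\textbf{The main obstacle.} The crux is the transfer step, namely justifying that an algebraically properly infinite projection is $C^*$-properly infinite. The subtlety is that the algebraic relation $p\oplus p=xpy$ uses arbitrary (non-self-adjoint) ring elements $x,y$, whereas $\precsim_{C^*}$ demands approximation by elements of the special form $z^*(\cdot)z$. For general positive elements these notions genuinely differ, and the comparison $\precsim$ need not respect the norm structure; it is only because $1_V$ is an honest \emph{projection} that the two coincide. I would therefore be careful to phrase the argument purely at the level of projections, where the idempotent can be functionally-calculus-normalized and the matrix factorization symmetrized, rather than attempting a general element-wise $C^*$-comparison. This is exactly where the projection hypothesis is indispensable and where I expect to spend the technical effort citing or reconstructing the relevant lemma from \cite{pin10} and \cite{kirch00}.
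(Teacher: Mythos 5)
Your proposal is correct and follows essentially the same route as the paper: reduce via Proposition \ref{prop5.2} to checking that each $1_V$ is $C^*$-properly infinite, then transfer the algebraic factorization $a(1_V\oplus 0)b=1_V\oplus 1_V$ in $M_2(A_\C(\g))$ into a Cuntz comparison, using that $1_V$ is an honest projection. The paper makes your ``transfer step'' precise by citing \cite[Proposition 2.4 (iii) $\Rightarrow$ (ii)]{ror92}, which for projections produces a sequence $r_i\in M_2(C^*(\g))$ with $r_i(1_V\oplus 0)r_i^*\rightarrow 1_V\oplus 1_V$, exactly the symmetrization you anticipated needing.
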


\begin{proof}
Suppose that $\ac$ is properly purely infinite. According to Proposition \ref{prop5.2}, it suffices to show that $1_V$ is $C^*$-properly infinite for every nonempty compact open $V\subseteq \go$. So fix some such $\emptyset\neq V\subseteq \go$. Theorem \ref{thm3.4} says that $1_V$ is a properly infinite idempotent in $\ac$. Thus there exists $a,b\in M_2(\ac)$ such that $a(1_V\oplus 0)b=1_V\oplus 1_V$. We may consider $M_2(\ac)$ as a subalgebra of $M_2(C^*(\g))$ in the natural way, and then apply \cite[Proposition 2.4 (iii) $\Rightarrow$ (ii)]{ror92} to find a sequence $\{r_i\}\subseteq M_2(C^*(\g))$ such that $r_i(1_V\oplus 0)r_i^*\rightarrow 1_V\oplus 1_V$. Therefore, we have $1_V\oplus 1_V\precsim_{C^*} 1_V\oplus 0$, meaning that $1_V$ is $C^*$-properly infinite.
\end{proof}

For any locally convex row-finite $k$-graph $\La$, we may associate a universal $C^*$-algebra $C^*(\La)$ \cite{kum00,rae03}, which coincides with the groupoid $C^*$-algebra $C^*_r(\g_\La)$ (see \cite[Corollary 3.5(i)]{kum00} for example). Hence, Proposition \ref{prop5.4} deduces the following result for $k$-graph algebras.

\begin{cor}
Let $\La$ be a locally convex row-finite $k$-graph, which is strongly aperiodic. If $\mathrm{KP}_\C(\La)$ is purely infinite, then $C^*(\La)$ is $C^*$-purely infinite.
\end{cor}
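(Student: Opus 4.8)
The plan is to reduce the statement to a direct application of Proposition~\ref{prop5.4}, combined with the groupoid model for Kumjian-Pask algebras and the equivalences established in Theorem~\ref{thm4.2}. First I would upgrade the hypothesis from pure infiniteness to proper pure infiniteness: since $\La$ is strongly aperiodic, Theorem~\ref{thm4.2} asserts that $\mathrm{KP}_\C(\La)$ being purely infinite is equivalent to it being properly purely infinite. Hence the assumption immediately yields that $\mathrm{KP}_\C(\La)$ is properly purely infinite in the sense of Definition~\ref{defn3.2}.

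Next I would invoke the groupoid realization recalled at the beginning of Section~4: the boundary path groupoid $\g_\La$ is a second countable, Hausdorff, ample groupoid, and one has an isomorphism $\mathrm{KP}_\C(\La)\cong A_\C(\g_\La)$ carrying the generators $s_\la$ to the relevant characteristic functions. Under this identification, the previous step shows that $A_\C(\g_\La)$ is properly purely infinite.

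To apply Proposition~\ref{prop5.4} I must check its standing hypotheses on $\g_\La$. Second countability and amenability of $\g_\La$ hold for every locally convex row-finite $k$-graph (as noted in the introduction and just before this corollary), while strong effectiveness of $\g_\La$ is precisely the content of \cite[Proposition~6.3]{cla17} under the strong aperiodicity of $\La$. With these verified, Proposition~\ref{prop5.4} gives that $C^*(\g_\La)=C^*_r(\g_\La)$ is $C^*$-purely infinite. Since $C^*(\La)=C^*_r(\g_\La)$ by \cite[Corollary~3.5(i)]{kum00}, the desired conclusion follows.

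I do not anticipate a genuine obstacle, as the result is essentially a specialization of Proposition~\ref{prop5.4} to the $k$-graph groupoid; the only point requiring care is the bookkeeping of hypotheses. In particular, the hard part is really just confirming that strong aperiodicity of $\La$ is exactly what delivers strong effectiveness (equivalently, essential principality) of $\g_\La$, so that both Theorem~\ref{thm4.2} and Proposition~\ref{prop5.4} apply without further assumptions.
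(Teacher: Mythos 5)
Your proposal is correct and follows exactly the paper's own argument: upgrade to proper pure infiniteness via Theorem~\ref{thm4.2}, identify $\mathrm{KP}_\C(\La)$ with $A_\C(\g_\La)$, verify that $\g_\La$ is second countable, amenable, and strongly effective, and then apply Proposition~\ref{prop5.4}. The only difference is that you spell out the hypothesis-checking (in particular citing \cite[Proposition 6.3]{cla17} for strong effectiveness) more explicitly than the paper does, which is harmless.
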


\begin{proof}
By Theorem \ref{thm4.2}, $A_\C(\g_\La)=\mathrm{KP}_\C(\La)$ is properly purely infinite. Since the groupoid $\g_\La$ is always second-countable and amenable, Proposition \ref{prop5.4} follows the result.
\end{proof}

%%%%%%%%%%%%%%%%%%%%%%%%%%%%%%%%%%%%%%%%%%%%%%%%


\begin{thebibliography}{99}

\bibitem{abr05} G. Abrams and G. Aranda Pino, \emph{The Leavitt path algebras of a graph}, J. Algebra {\bf293} (2005), 319-334.

\bibitem{ana97}
C. Anantharaman-Delaroche, \emph{Purely infinite $C\ast$-algebras arising from dynamical systems}, Bull. Soc. Math. France {\bf125} (1997), 199-225.

\bibitem{ana00}
C. Anantharaman-Delaroche and J. Renault. \emph{Amenable groupoids}, volume 36 of Monographs of L'Enseignement Mathématique, Geneva, 2000.

\bibitem{ara02}
P. Ara, K.R. Goodearl, and E. Pardo, \emph{$K_0$ of purely infinite simple regular rings}, K-Theory {\bf26} (2002), 69-100.

\bibitem{pin13} G. Aranda Pino, J. Clark, A. an Huef, and I. Raeburn,  \emph{Kumjian-Pask algebras of higher rank graphs}, Trans. Amer. Math. Soc. {\bf 365} (2013), 3613-3641.

\bibitem{pin10}
G. Aranda Pino, K.R. Goodearl, F. Perera, and M. Siles Molina, \emph{Non-simple purely infinite rings}, Amer. J. Math. {\bf132} (2010), 563-610.

\bibitem{bon17}
C. B\"{o}nicke and K. Li, \emph{Ideal structure and pure infiniteness of ample groupoid $C^\ast$-algebras}, Ergodic Theory and Dynamical Systems (2018), 1-30. doi:10.1017/etds.2018.39

\bibitem{bro14}
J. Brown, L.O. Clark, C. Farthing, and A. Sims, \emph{Simplicity of algebras associated to \'{e}tale groupoids}, Semigroup Forum {\bf88} (2014), 433-452.

\bibitem{bro19}
J. Brown, L.O. Clark, A. an Huef, \emph{Dense subalgebras of purely infinite simple groupoid $C^*$-algebras}, preprint, arXiv:1708.05130 (2019).

\bibitem{bro15}
J.H. Brown, L.O. Clark and A. Sierakowski, \emph{Purely infinite $C^\ast$-algebras associated to \'{e}tale groupoids}, Ergodic Theory Dynam. Systems {\bf35} (2015), 2397-2411.

\bibitem{cla15}
L.O. Clark and C. Edie-Michell, \emph{Uniqueness theorems for Steinberg algebras}, Algebra Represent. Theory {\bf 18} (2015), 907-916.

\bibitem{cla16}
L.O. Clark, C. Edie-Michell, A. an Huef and A. Sims, \emph{Ideals of Steinberg algebras of strongly effective groupoids, with applications to Leavitt path algebras}, preprint, arXiv:1601.07238 (2016).

\bibitem{cla14}
L.O. Clark, C. Farthing, A. Sims and M. Tomforde, \emph{A groupoid generalisation of Leavitt path algebras}, Semigroup Forum {\bf89} (2014), 501-517.

\bibitem{cla14-2}
L.O. Clark, C. Flynnand, and A. an Huef, \emph{Kumjian-Pask algebras of locally convex higher-rank graphs}, J. Algebra {\bf 399} (2014), 445-474.

\bibitem{cla17}
L.O. Clark and Y.E.P. Pangalela, \emph{Kumjian-Pask algebras of finitely-aligned higher-rank graphs}, J. Algebra {\bf482} (2017), 364-397.

\bibitem{cun81}
J. Cuntz. \emph{$K$-theory for certain $C^*$-algebras}, Ann. Math. {\bf113} (1981), 181-197.

\bibitem{eva12}
D.G. Evans and A. Sims, \emph{When is the Cuntz-Krieger algebra of a higher-rank graph approximately finite-dimensional?}, J. Funct. Anal. {\bf263} (2012), 183-215.

\bibitem{kirch00}
E. Kirchberg and M. R{\o}rdam, \emph{Non-simple purely infinite $C^*$-algebras}, Amer. J. Math. {\bf122} (2000), 637-666.

\bibitem{kum00}
A. Kumjian and D. Pask, \emph{Higher rank graph $C^*$-algebras}, New York J. Math. {\bf 6} (2000), 1-20.

\bibitem{lar18}
H. Larki, \emph{Purely infinite simple Kumjian-Pask algebras}, Forum Math. {\bf30}(1) (2018), 253-268.


\bibitem{pas07}
C. Pasnicu and M. R{\o}rdam, \emph{Purely infinite $C^*$-algebras of real rank zero}, J. reine angew. Math. {\bf613} (2007), 51-73.

\bibitem{rae03} I. Raeburn, A. Sims, and T. Yeend, \emph{Higher rank graphs and their $C^*$-algebras}, Proc. Edinb. Math. Soc. {\bf 46} (2003), 99-115.

\bibitem{ren08}
J. Renault, \emph{Cartan subalgebras in $C^\ast$-algebras}, Irish Math. Soc. Bulletin {\bf61} (2008), 29–63.

\bibitem{ren80}
J. Renault, A groupoid approach to $C^\ast$-algebras, Lecture Notes in Mathematics, vol. 793, Springer, Berlin, 1980.

\bibitem{ror92}
M. R{\o}rdam, \emph{On the structure of simple $C^\ast$-algebras tensored with a UHF-algebra. II}, J. Funct. Anal. {\bf107} (1992), 255-269.

\bibitem{sim06}
A. Sims, \emph{Gauge-invariant ideals in the $C^*$-algebras of finitely aligned higher-rank graphs}, Canad. J. Math. {\bf58} (2006), 1268-1290.

\bibitem{ste10}
B. Steinberg, \emph{A groupoid approach to inverse semigroup algebras}, Adv. Math. {\bf223} (2010), 689-727.

\bibitem{ste18-2}
B. Steinberg, \emph{Prime $\acute{e}$tale groupoid algebras with applications to inverse semigroup and Leavitt path algebras}, J. Pure Appl. Algebra {\bf223}(6) (2019), 2474-2488.

\bibitem{ste16}
B. Steinberg, \emph{Simplicity, primitivity and semiprimitivity of étale groupoid algebras with applications to inverse semigroup algebras}, J. Pure Appl. Algebra {\bf220}(3) (2016), 1035-1054.


\end{thebibliography}
\end{document}